\newtheorem{theorem}{Theorem}[section]
\newtheorem{corollary}[theorem]{Corollary}
\newtheorem{definition}[theorem]{Definition}
\newtheorem{lemma}[theorem]{Lemma}
\newtheorem{proposition}[theorem]{Proposition}
\newtheorem{example}[theorem]{Example}
\def\J#1#2#3{ \left\{ #1,#2,#3 \right\} }
\def\11{\textbf{$1$}}
\title[Pointwise-generalized-inverses of linear maps]{Pointwise-generalized-inverses of linear maps between C$^*$-algebras and JB$^*$-triples}
\author[Ben Ali Essaleh]{Ahlem Ben Ali Essaleh}
\address{Faculte des Sciences de Monastir, Département de Mathématiques, Avenue de L'environnement, 5019 Monastir, Tunisia}
\email{ahlem.benalisaleh@gmail.com}
\author[Peralta]{Antonio M. Peralta}
\address{Departamento de An{\'a}lisis Matem{\'a}tico, Facultad de
Ciencias, Universidad de Granada, 18071 Granada, Spain.}
\email{aperalta@ugr.es}
\author[Ram\'{i}rez]{Mar{\'\i}a Isabel Ram{\'\i}rez}
\address{Departamento Matem\'aticas, Universidad de
Almer\'ia, 04120 Almer\'ia, Spain} \email{mramirez@ual.es}
\thanks{The first author was supported by the Higher Education And Scientific Research Ministry In Tunisia, UR11ES52~: Analyse, G{\'e}om{\'e}trie et Applications. The last two authors were supported by the Spanish Ministry of Economy and Competitiveness and European Regional Development Fund project no. MTM2014-58984-P and Junta de Andaluc\'{\i}a grant FQM375.}
\begin{document}

\begin{abstract} We study pointwise-generalized-inverses of linear maps between C$^*$-algebras. Let $\Phi$ and $\Psi$ be linear maps between complex Banach algebras $A$ and $B$. We say that $\Psi$ is a pointwise-generalized-inverse of $\Phi$ if $\Phi(aba)=\Phi(a)\Psi(b)\Phi(a),$ for every $a,b\in A$. The pair $(\Phi,\Psi)$ is Jordan-triple multiplicative if $\Phi$ is a pointwise-generalized-inverse of $\Psi$ and the latter is a pointwise-generalized-inverse of $\Phi$. We study the basic properties of this maps in connection with Jordan homomorphism, triple homomorphisms and strongly preservers. We also determine conditions to guarantee the automatic continuity of the pointwise-generalized-inverse of continuous operator between C$^*$-algebras. An appropriate generalization is introduced in the setting of JB$^*$-triples.
\end{abstract}

\maketitle

\section{Introduction}

Let $\Delta : A\to B$ be a mapping between two Banach algebras. Accordingly to the standard literature (see \cite{Mol06a,Mol06,Lu03} and \cite{KimPark}) we shall say that $\Delta$ is a \emph{Jordan triple map} (respectively, \emph{Jordan triple product homomorphism} or a \emph{Jordan triple multiplicative} mapping) if the identity $$\Delta (a b c + c b a  ) =\Delta (a) \Delta (b) \Delta (c) + \Delta (c) \Delta (b) \Delta (a) $$ (respectively, $\Delta (a b a) =\Delta (a) \Delta (b) \Delta (a)$) holds for every $a,b,c\in A$. For a linear map $T:A\to B,$ it is easy to see that $T$ is a Jordan triple map if, and only if, Jordan triple product homomorphism. In \cite{Mol06}, L. Molnar gives a complete description of those Jordan triple multiplicative bijections $\Phi$ between the self adjoint parts of two von Neumann algebras $M$ and $N$. F. Lu studies in \cite{Lu03} bijective maps from  a standard operator algebra into a $\mathbb{Q}$-algebra which are generalizations of Jordan triple multiplicative maps.\smallskip

In this paper we introduce a new point of view by introducing and studying pairs of linear maps which are Jordan triple multiplicative.  Henceforth let $A$ and $B$ denote two complex Banach algebras.

\begin{definition}\label{de pseudo inverses} Let $\Phi, \Psi :A\rightarrow B$ be linear maps.
We shall say that $\Psi$ is a pointwise-generalized-inverse {\rm(}pg-inverse for short{\rm)} of $\Phi$ if the identity
$$\Phi(aba)=\Phi(a)\Psi(b)\Phi(a),$$ holds for all $a,b\in A$.
If in addition $\Phi$ also is a pointwise-generalized-inverse of $\Psi,$ we shall say that $\Psi$ is a
normalized-pointwise-generalized-inverse {\rm(}normalized-pg-inverse for short{\rm)} of $\Phi.$ In this case, we shall simply say that $(\Phi,\Psi)$ is Jordan-triple multiplicative.
\end{definition}

Let us observe that, in the linear setting, $\Psi: A\to B$ is a pg-inverse of $\Phi$ if and only if $$\Phi(abc + cba )=\Phi(a)\Psi(b)\Phi(c) + \Phi(c)\Psi(b)\Phi(a),$$ for all $a,b,c\in A$.\smallskip

%We shall say that a linear map $\Phi:A\rightarrow B$ is \emph{pseudo-regular}, if $\Phi$ admits a normalized-pg-inverse.

Every Jordan homomorphism (in particular, every homomorphism and every anti-homomorphism) $\pi: A\to B$ admits a pg-inverse. Actually, the couple $(\pi,\pi)$ is Jordan-triple multiplicative.\smallskip

An element $a$ in an associative ring $\mathcal{R}$ is called \emph{regular} or \emph{von Neumann regular} if it admits a \emph{generalized inverse} $b$ in $\mathcal{R}$ satisfying $a ba = a$. The element $b$ is not, in general, unique. Under these hypothesis $ab$ and $ba$ are idempotents with $(ab) a = a (ba) =a$.  If the identities $a ba =a$ and $bab =b$ hold we say that $b$ is a \emph{normalized generalized inverse} of $a$. An element $a$ may admit many different normalized generalized inverses. However, every regular element $a$ in a C$^*$-algebra $A$ admits a unique \emph{Moore-Penrose inverse} that is, a normalized generalized inverse $b$ such that $a b$ and $ b a$ are projections (i.e. self-adjoint idempotents) in $A$ (see \cite[Theorems 5 and 6]{HarMb92}). The unique Moore-Penrose inverse of a regular element $a$ will be denoted by $a^{\dag}$ for the unique Moore-Penrose inverse of $a$.\smallskip

A linear map between C$^*$-algebras admitting a pg-inverse is a \emph{weak preserver}, that is, maps regular elements to regular elements (see Lemma \ref{l basic pg inv}). However, we shall show later the existence of linear maps between C$^*$-algebras preserving regular elements but not admitting a pg-inverse (see Example \ref{example BurMarMoPe}). Being a linear weak preserver between C$^*$-algebras is not a completely determining condition, actually, for an infinite-dimensional complex separable Hilbert space $H$, a bijective continuous unital linear map preserving generalized invertibility in both directions $\Phi: B(H) \to B(H)$ leaves invariant the ideal of all compact operators, and the induced linear map on the Calkin algebra is either an automorphism or an antiautomorphism (see \cite{MbRodSem06}).\smallskip

In Proposition \ref{prop2} we show that a linear map $\Phi:A\rightarrow B$ between complex Banach algebras with $A$ unital, admits a normalized-pg-inverse if and only if one of the following statements holds:
\begin{enumerate}
    \item[$(b)$] There exists a Jordan homomorphism  $T:A\rightarrow B$ such that $\Phi= R_{\Phi(1)}\circ T$ and $\Phi(1) B=T(1) B;$
    \item[$(c)$] There exists a Jordan homomorphism $S:A\rightarrow B$ such that $\Phi=L_{\Phi(1)}\circ S$ and $B \Phi(1)=B S(1).$
\end{enumerate} A similar conclusion remains true for pair of bounded linear maps between general C$^*$-algebras which are Jordan-triple multiplicative (see Corollary \ref{c prop2 continuity instead unital}).\smallskip

A linear map $\Phi$ between C$^*$-algebras satisfying that $\Phi (a^{\dag}) = \Phi (a)^{\dag}$ for every regular element $a$ in the domain is called a strongly preserver. Strongly preservers between C$^*$-algebras and subsequent generalizations to JB$^*$-triples have been studied in \cite{BMM11,BurMarMor12,BurMarMoPe2016}. Following the conclusions of the above paragraph we can easily find a bounded linear map between C$^*$-algebras admitting a normalized-pg-inverse which is not a strongly preserver. In this setting, we shall show in Theorem \ref{thm2} that for each pair of linear maps between C$^*$-algebras $\Phi, \Psi:A\rightarrow B$ such that $(\Phi,\Psi)$ is Jordan-triple multiplicative, the following statements are equivalent:
\begin{enumerate}[$(a)$]
\item $\Phi$ and $\Psi$ are contractive;
\item  $\Psi(a)=\Phi(a^*)^*,$ for every $a\in A;$
\item $\Phi$ and $\Psi$ are triple homomorphisms.
\end{enumerate} When $A$ is unital the above conditions are equivalent to the following:\begin{enumerate}\item[$(d)$] $\Phi$ and $\Psi$ are strongly preservers,
\end{enumerate}  (see \cite[Theorem 3.5]{BurMarMor12}). As a consequence, we prove that every contractive Jordan homomorphism between C$^*$-algebras or between JB$^*$-algebras is a Jordan $^*$-homomorphism (cf. Corollaries \ref{c Jordan homomorphisms} and \ref{c contractive are symetric in JB*-algebras}).\smallskip

Let $\Phi,\Psi:A\rightarrow B$ be linear maps between complex Banach algebras. If $A$ is unital and $(\Phi,\Psi)$ is Jordan-triple multiplicative, then $\Phi$ is norm continuous if and only if $\Psi$ is norm continuous (cf. Lemma \ref{l basic pg inv}). In the non-unital setting this conclusion becomes a difficult question. In section \ref{sec 3}, we explore this problem by showing that if $\Phi, \Psi: c_0 \to c_0$ are linear maps such that $\Phi$ is continuous and $(\Phi,\Psi)$ is Jordan-triple multiplicative, then $\Psi$ is continuous (see Proposition \ref{p automatic continuity on c0}). In the non-commutative setting, we prove that if $\Phi, \Psi: K(H_1) \to K(H_2)$ are linear maps such that $\Phi$ is continuous and $(\Phi,\Psi)$ is Jordan-triple multiplicative, then $\Phi$ admits a continuous normalized-pg-inverse (see Theorem \ref{p automatic continuity on K(H)}).\smallskip

In the last section we extend the notion of being pg-invertible to the setting of JB$^*$-triples.

\subsection{Preliminaries and background}\label{subsec: preliminaries}

We gather some basic facts, definitions and references in this subsection. We recall that \emph{JB*-triple} is a complex Jordan triple system which is also a Banach space satisfying the following axioms:
\begin{enumerate}[$(a)$]
\item The map $L(x,x)$ is an hermitian operator with
non-negative spectrum for all $x\in { E}$.
\item $\| \J xxx \| = \|x\|^{3}$ for all $x\in { E}$.
\end{enumerate} where $L(x,y)(z):=\J xyz,$ for all $x,y,z $ in $E$ (see \cite{Ka83} and \cite{CaRod14}).\smallskip

The attractive of this definition relies, among other holomorphic properties, on the fact that
every C$^*$-algebra is a JB$^*$-triple with respect to $$\J xyz := 2^{-1} (x
y^* z + z y^* x),$$ the Banach space $B(H,K)$ of all bounded linear operators
between two complex Hilbert spaces $H,K$ is also an example of a JB$^*$-triple
with respect to the triple product given above, and every JB$^*$-algebra is a JB$^*$-triple with triple
product $$\J abc := (a \circ b^*) \circ c + (c\circ b^*) \circ a - (a\circ c)
\circ b^*.$$

In a clear analogy with von Neumann algebras, a JB$^*$-triple which is also a dual Banach space is called a \emph{JBW$^*$-triple}. Every JBW$^*$-triple admits a (unique) isometric predual and its triple product is separately weak$^*$ continuous \cite{BarTi}. The second dual of a JB$^*$-triple $E$ is a JBW$^*$-triple with a product extending the product of $E$ \cite{Di}.\smallskip

Projections are frequently applied to produce approximation and spectral resolutions of hermitian elements in von Neumann algebras. In the wider setting of JBW$^*$-triple this role is played by tripotents. We recall that an element $e$ in a JB$^*$-triple $E$ is called a \emph{tripotent} if $\J eee =e$. Each tripotent $e$ in $E$ produces a \emph{Peirce decomposition} of $E$ in the form
$$E= E_{2} (e) \oplus E_{1} (e) \oplus E_0 (e),$$ where for
$i=0,1,2,$ $E_i (e)$ is the $\frac{i}{2}$ eigenspace of $L(e,e)$
(compare \cite[\S 4.2.2]{CaRod14}). The projection of $E$ onto $E_i(e)$ is denoted by $P_i(e)$. %The Peirce decomposition satisfies certain rules
%known as \emph{Peirce arithmetic}: $$\J {E_{i}(e)}{E_{j}
%(e)}{E_{k} (e)}\subseteq E_{i-j+k} (e),$$ if $i-j+k \in \{
%0,1,2\}$ and is zero otherwise. In addition, $$\J {E_{2}
%(e)}{E_{0}(e)}{E} = \J {E_{0} (e)}{E_{2}(e)}{E} =0.$$
%We observe that, for a tripotent $e\in E$, $B(e,e)=P_0(e)$.
\smallskip

It is known that the Peirce space $E_2 (e)$ is a JB$^*$-algebra with product
$x\circ_e y := \J xey$ and involution $x^{\sharp_e} := \J exe$.
\smallskip

For additional details on JB$^*$-algebras and JB$^*$-triples the reader is referred to the encyclopedic monograph \cite{CaRod14}.\smallskip

For the purposes of this paper, we also consider von Neumann regular elements in the wider setting of JB$^*$-triples (see subsection \ref{subsec: preliminaries} for the concrete definitions). Let $a$ be an element in a JB$^*$-triple $E$. Following the standard notation in \cite{FerGarSanSi}, \cite{Ka96} and \cite{BurKaMoPeRa08} we shall say that $a$ is \emph{von Neumann regular} if $a\in Q(a) (E) =\{a, E, a\}$. It is known that $a$ is von Neumann regular if, and only if, $a$ is \emph{strongly von Neumann regular} (i.e. $a\in Q(a)^2 (E)$) if, and only if, there exists (a unique) $b\in E$ such
that $Q(a) (b) =a,$ $Q(b) (a) =b$ and $[Q(a),Q(b)]:=Q(a)\,Q(b) - Q(b)\, Q(a)=0$ if, and only if, $Q(a)$ is norm-closed (compare \cite[Theorem 1]{FerGarSanSi}, \cite[Lemma 3.2, Corollary 3.4,
Proposition 3.5, Lemma 4.1]{Ka96}, \cite[Theorem 2.3, Corollary 2.4]{BurKaMoPeRa08}). The unique element $b$ given above is denoted by $a^{\wedge}$. The set of all von Neumann regular elements in $E$ is denoted by $E^{\wedge}$.\smallskip

Let us recall that an element $a$ in a unital Jordan Banach algebra $J$ is called invertible whenever there exists $b\in J$ satisfying $a \circ b = 1$ and $a^2 \circ b = a.$ Under the above circumstances, the element $b$ is unique and will be denoted by $a^{-1}$. The symbol $J^{-1}= \hbox{inv}(J)$ will denote the set of all invertible elements in $J$. It is well known in Jordan theory that $a$ is invertible if, and only if, the mapping $x\mapsto U_a (x):= 2 (a\circ x)\circ a - a^2 \circ x$ is invertible in $L(J)$, and in that case $U_a^{-1}= U_{a^{-1}}$ (see, for example \cite[\S 4.1.1]{CaRod14}).\smallskip

The notion of invertibility in the Jordan setting provides an adequate point of view to study regularity. More concretely, it is shown in \cite{Ka96}, \cite[Lemma 3.2]{Ka2001} and \cite[Proposition 2.2 and proof of Theorem 3.4]{BurKaMoPeRa08} that an element $a$ in a JB$^*$-triple $E$ is von Neumann regular if and only if there exists a tripotent $v\in E$ such that $a$ is a positive and invertible element in the JB$^*$-algebra $E_2(e)$. It is further known that $a^{\wedge}$ is precisely the (Jordan) inverse of $a$ in $E_2 (v)$.

\section{Pointwise-generalized-inverses}

Our first lemma gathers some basic properties of pg-inverses.

\begin{lemma}\label{l basic pg inv}  Let $\Phi:A\rightarrow B$ be a linear map between complex Banach algebras admitting a
pg-inverse $\Psi$. Then the following statements hold:
\begin{enumerate}[$(a)$]
\item $\Phi$ maps regular elements in $A$ to regular elements in $B$, that is, $\Phi$ is a weak regular preserver, More concretely, if $b$ is a generalized inverse of $a$ then $\Psi(b)$ is a generalized inverse of $\Phi(a)$;
   \item If $A$ is unital and $(\Phi,\Psi)$ is Jordan-triple multiplicative, then $\ker(\Phi)=\ker(\Psi);$
  \item If $A$ is unital and $(\Phi,\Psi)$ is Jordan-triple multiplicative, then $\Phi$ is norm continuous if and only if $\Psi$ is norm continuous;
  \item If $A$ and $B$ are unital and $\Phi(1)\in B^{-1},$ then $\Psi=R_{\Phi(1)^{-1}}\circ L_{\Phi(1)^{-1}}\circ \Phi$ is the unique pg-inverse of $\Phi;$
  \item Let $\Phi_1 : C\to A$ and $\Phi_2: B\to C$ be linear maps admitting a pg-inverse, where $C$ is a Banach algebra, then $\Phi_2 \Phi$ and $\Phi\Phi_1$ admit a pg-inverse too. In particular, if $A$ and $B$ are C$^*$-algebras, then the maps $x\mapsto\Phi(x)^*$, $x\mapsto\Phi(x^*)$, and $x\mapsto\phi(x^*)^*$ admit pg-inverses.
\end{enumerate}
\end{lemma}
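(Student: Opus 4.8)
The plan is to treat the five items by unwinding the defining identity $\Phi(aba)=\Phi(a)\Psi(b)\Phi(a)$, together with its companion $\Psi(aba)=\Psi(a)\Phi(b)\Psi(a)$ which holds when $(\Phi,\Psi)$ is Jordan-triple multiplicative. Item $(a)$ is immediate: if $aba=a$ then $\Phi(a)=\Phi(aba)=\Phi(a)\Psi(b)\Phi(a)$, which is exactly the assertion that $\Psi(b)$ is a generalized inverse of $\Phi(a)$; in particular $\Phi$ sends regular elements to regular elements.

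Items $(b)$, $(c)$ and $(d)$ all rest on a single specialization. First I would set $a=1$ in the two identities to obtain $\Phi(b)=\Phi(1)\Psi(b)\Phi(1)$ and $\Psi(b)=\Psi(1)\Phi(b)\Psi(1)$ for every $b\in A$. For $(b)$, the first identity shows that $\Psi(b)=0$ forces $\Phi(b)=0$, and the second gives the reverse inclusion, so $\ker(\Phi)=\ker(\Psi)$. For $(c)$, the same identities read $\Phi=L_{\Phi(1)}R_{\Phi(1)}\Psi$ and $\Psi=L_{\Psi(1)}R_{\Psi(1)}\Phi$; since left and right multiplication operators are bounded, continuity of either map transfers to the other. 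For $(d)$, assuming only that $\Psi$ is a pg-inverse of $\Phi$ and that $\Phi(1)\in B^{-1}$, I would solve $\Phi(b)=\Phi(1)\Psi(b)\Phi(1)$ for $\Psi(b)$, obtaining $\Psi(b)=\Phi(1)^{-1}\Phi(b)\Phi(1)^{-1}=R_{\Phi(1)^{-1}}L_{\Phi(1)^{-1}}\Phi(b)$; since \emph{every} pg-inverse must satisfy this relation, the formula simultaneously exhibits $\Psi$ and forces its uniqueness.

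For $(e)$ the argument is purely algebraic and uses no Banach structure. Writing $\Psi$, $\Psi_1$, $\Psi_2$ for pg-inverses of $\Phi$, $\Phi_1$, $\Phi_2$, I would verify that $\Psi_2\Psi$ is a pg-inverse of $\Phi_2\Phi$ through the computation $(\Phi_2\Phi)(aba)=\Phi_2(\Phi(a)\Psi(b)\Phi(a))=\Phi_2(\Phi(a))\Psi_2(\Psi(b))\Phi_2(\Phi(a))$, applying the pg-inverse identity for $\Phi$ and then the one for $\Phi_2$; the check that $\Psi\Psi_1$ is a pg-inverse of $\Phi\Phi_1$ is entirely symmetric. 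For the C$^*$-algebra consequences I would regard the involution $\sigma(x)=x^*$ as its own pg-inverse, since $\sigma(aba)=(aba)^*=\sigma(a)\sigma(b)\sigma(a)$, and then read the three maps as $\sigma\Phi$, $\Phi\sigma$ and $\sigma\Phi\sigma$. The only point demanding care, which I would flag as the main obstacle, is that $\sigma$ is conjugate-linear, so the first two maps are conjugate-linear rather than linear; however the composition computation above invokes only the multiplicative identities and never linearity, so it applies verbatim and produces the pg-inverses $x\mapsto\Psi(x)^*$, $x\mapsto\Psi(x^*)$ and $x\mapsto\Psi(x^*)^*$, respectively.
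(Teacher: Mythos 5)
Your proof is correct and takes essentially the same route as the paper: items $(a)$--$(c)$ and the composition part of $(e)$ use exactly the same specializations (setting $a=1$ to get $\Phi(b)=\Phi(1)\Psi(b)\Phi(1)$ and $\Psi(b)=\Psi(1)\Phi(b)\Psi(1)$) and the same composition computation. Your arguments for $(d)$ and for the involution/conjugate-linearity point in $(e)$ correctly supply the details that the paper leaves to the reader (``$(d)$ is left to the reader'', ``the rest is clear'').
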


\begin{proof}$(a)$ Suppose that $a$ is a regular element in $A$ and let $b$ be a generalized inverse of $a$. Then $\Phi (a) = \Phi (a ba) = \Phi(a) \Psi (b)\Phi(a)$, and hence $\Phi(a)$ is regular too.\smallskip

$(b)$ The conclusion follows from the identities $\Phi (x) = \Phi(1) \Psi(x) \Phi(1)$ and  $\Psi (x) = \Psi(1) \Phi(x) \Psi(1)$ ($x\in A$). Statement $(c)$ can be proved from the same identities.\smallskip

$(d)$ is left to the reader.\smallskip

$(e)$ Suppose that $\Phi_1 : C\to A$ admits a pg-inverse $\Psi_1$. Then $$\Phi \Phi_1 (a b a ) = \Phi (\Phi_1(a)\Psi_1(b)\Phi_1(a))= \Phi(\Phi_1)(a) \Psi(\Psi_1(b)) \Phi(\Phi_1)(a),$$ for all $a,b\in C$. The rest is clear.\end{proof}

In the hypothesis of the above lemma, let us observe that a pg-inverse of a continuous linear operator $\Phi : A\to B$ need not be, in general, continuous. Take, for example, two infinite dimensional Banach algebras $A$ and $B$, a continuous homomorphism $\pi : A \to B$ and an unbounded linear mapping $F: A\to B$. We define $\Phi, \Psi : A\oplus^{\infty} A\to B\oplus^{\infty} B$, $\Phi (a_1,a_2) = (\pi (a_1),0)$ and $\Psi (\pi(a_1), F(a_2))$. Clearly, $\Psi$ is unbounded and $\Phi (a_1,a_2) \Psi (b_1,b_2) \Phi (a_1,a_2) = \Phi ((a_1,a_2) (b_1,b_2) (a_1,a_2)$.\smallskip

We have just seen that every linear map admitting a pg-inverse is a weak regular preserver. The Example \ref{example BurMarMoPe} below shows that reciprocal implication is not always true.\smallskip

The following technical lemma isolates an useful property of linear maps admitting a pg-inverse.

\begin{lemma}\label{lem1} Let $\Phi:A\rightarrow B$ be a liner map between complex Banach algebras, where $A$ is unital. Suppose that $\Psi:A\to B$ is a pg-inverse of $\Phi$, and $z$ is a generalized inverse of $\Phi(1).$ Then we have $$\Phi=L_{(\Phi(1)z)}\circ
\Phi=R_{(z\Phi(1))}\circ \Phi.$$
\end{lemma}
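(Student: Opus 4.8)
The plan is to exploit the defining pg-inverse identity together with the regularity relations for $z$ to rewrite $\Phi(x)$ in the desired factored forms. Since $A$ is unital, the key move is to specialize the identity $\Phi(aba)=\Phi(a)\Psi(b)\Phi(a)$ by setting $a=1$, which immediately yields
\begin{equation}\label{eq:plan-key}
\Phi(b)=\Phi(1)\,\Psi(b)\,\Phi(1)\qquad(b\in A).
\end{equation}
This is the only consequence of the pg-inverse hypothesis I expect to need, and it reduces everything to algebraic manipulation inside $B$ using the fact that $z$ is a generalized inverse of $\Phi(1)$, i.e. $\Phi(1)\,z\,\Phi(1)=\Phi(1)$.

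Next I would compute $L_{(\Phi(1)z)}\circ\Phi$ directly. Applying the operator to an arbitrary $x\in A$ and substituting \eqref{eq:plan-key} gives
\begin{equation*}
L_{(\Phi(1)z)}\Phi(x)=\Phi(1)z\,\Phi(x)=\Phi(1)z\,\Phi(1)\,\Psi(x)\,\Phi(1)=\Phi(1)\,\Psi(x)\,\Phi(1)=\Phi(x),
\end{equation*}
where the third equality is exactly the generalized-inverse relation $\Phi(1)z\Phi(1)=\Phi(1)$. The symmetric computation for $R_{(z\Phi(1))}\circ\Phi$ runs identically, this time collecting $\Phi(1)z\Phi(1)=\Phi(1)$ from the right:
\begin{equation*}
R_{(z\Phi(1))}\Phi(x)=\Phi(x)\,z\Phi(1)=\Phi(1)\,\Psi(x)\,\Phi(1)\,z\Phi(1)=\Phi(1)\,\Psi(x)\,\Phi(1)=\Phi(x).
\end{equation*}
Both chains close the loop back to $\Phi(x)$, so both factorizations hold for every $x$, giving the claimed operator identities.

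I do not anticipate a genuine obstacle here; the statement is essentially a one-line substitution once \eqref{eq:plan-key} is in hand, and the main thing to be careful about is simply invoking the correct bracketing of the generalized-inverse relation $\Phi(1)z\Phi(1)=\Phi(1)$ in the right spot (left-collapse for $L$, right-collapse for $R$). One point worth stating explicitly in the writeup is that I am using the pg-inverse identity only at $a=1$, so the result needs $\Psi$ to be a pg-inverse of $\Phi$ but makes no use of $\Phi$ being a pg-inverse of $\Psi$; the unitality of $A$ is what makes the specialization $a=1$ available, and that is the sole structural hypothesis actually consumed.
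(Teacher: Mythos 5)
Your proof is correct, and it takes a more direct route than the paper's. You specialize the quadratic pg-inverse identity at $a=1$ to obtain $\Phi(x)=\Phi(1)\Psi(x)\Phi(1)$ and then finish by pure substitution, using only the relation $\Phi(1)z\Phi(1)=\Phi(1)$. The paper instead invokes the linearized (polarized) form of the identity at $(a,b,c)=(1,1,x)$, obtaining $2\Phi(x)=\Phi(1)\Psi(1)\Phi(x)+\Phi(x)\Psi(1)\Phi(1)$, then replaces $\Psi(1)$ by the arbitrary generalized inverse $z$, and finally splits the resulting sum into its two summands using the idempotency of $\Phi(1)z$ and $z\Phi(1)$. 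That route has two extra moving parts: the substitution of $z$ for $\Psi(1)$ is stated without explicit justification (it is true, but the natural way to verify it is precisely your identity $\Phi(x)=\Phi(1)\Psi(x)\Phi(1)$, applied on each side), and the passage from ``the two terms sum to $2\Phi(x)$'' to ``each term equals $\Phi(x)$'' needs a short additional argument with the idempotents. Your version avoids both complications, makes transparent why the conclusion does not depend on which generalized inverse $z$ of $\Phi(1)$ is chosen, and, as you correctly observe, consumes only the hypothesis that $\Psi$ is a pg-inverse of $\Phi$ together with the unitality of $A$, which is exactly what the lemma assumes.
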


\begin{proof} Since $z$ is a generalized inverse of $\Phi(1)$, the elements $\Phi(1)z$ and $z\Phi(1)$ are idempotents and $\Phi(1)z\Phi(1) = \Phi (1)$. For each $x\in A$ we have $$2 \Phi (x)= \Phi (11x + x 11)  = \Phi(1) \Psi (1) \Phi(x) + \Phi(x) \Psi(1) \Phi (1)$$ $$= \Phi(1) z \Phi(x) + \Phi(x) z \Phi (1).$$ This implies that $\Phi (x) = (\Phi(1) z) \Phi(x) = \Phi(x) (z \Phi (1)).$
\end{proof}

It is not obvious that a linear map admitting a pg-inverse also admits a normalized-pg-inverse. We can conclude now that if the domain is a unital Banach algebra then the desired statement is always true.

\begin{proposition}\label{prop1} Suppose that $A$ is a unital Banach algebra. Let $\Phi:A\rightarrow B$ a linear map admitting a
pointwise-generalized-inverse. Then $\Phi$ has a normalized-pg-inverse. More concretely, if $z$ is a generalized inverse of $\Phi(1)$, the mapping $\Theta = L_{z}\circ R_{z}\circ\Phi$ is a normalized-pg-inverse of $\Phi$.
\end{proposition}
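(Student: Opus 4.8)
The plan is to take the candidate $\Theta(x)=L_z R_z\Phi(x)=z\,\Phi(x)\,z$ and verify directly the two halves of the Jordan-triple multiplicativity of the pair $(\Phi,\Theta)$: that $\Theta$ is a pg-inverse of $\Phi$, i.e. $\Phi(aba)=\Phi(a)\,\Theta(b)\,\Phi(a)$, and that $\Phi$ is a pg-inverse of $\Theta$, i.e. $\Theta(aba)=\Theta(a)\,\Phi(b)\,\Theta(a)$, for all $a,b\in A$. Both obligations must be checked from scratch, since $\Theta$ need not coincide with the given $\Psi$; the gain is that $\Theta$, being conjugated symmetrically by $z$, will turn out to satisfy the reverse identity that $\Psi$ itself may fail.

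First I would harvest the two structural identities that make the computation collapse. Specializing the defining identity $\Phi(aba)=\Phi(a)\Psi(b)\Phi(a)$ at $a=1$ yields the basic relation
$$\Phi(b)=\Phi(1)\,\Psi(b)\,\Phi(1)\qquad(b\in A).$$
Next, because $z$ is a generalized inverse of $\Phi(1)$, Lemma \ref{lem1} supplies the two absorption identities $\Phi(1)z\,\Phi(x)=\Phi(x)$ and $\Phi(x)\,z\Phi(1)=\Phi(x)$ for every $x\in A$; equivalently, the idempotents $\Phi(1)z$ and $z\Phi(1)$ act as a left and a right unit on the range of $\Phi$.

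With these in hand both verifications become mechanical substitutions. For the first, I expand
$$\Phi(a)\,\Theta(b)\,\Phi(a)=\Phi(a)\,z\,\Phi(b)\,z\,\Phi(a),$$
replace the inner $\Phi(b)$ by $\Phi(1)\Psi(b)\Phi(1)$, regroup as $\big(\Phi(a)z\Phi(1)\big)\,\Psi(b)\,\big(\Phi(1)z\Phi(a)\big)$, and apply the absorption identities to reach $\Phi(a)\Psi(b)\Phi(a)=\Phi(aba)$. For the second, I expand
$$\Theta(a)\,\Phi(b)\,\Theta(a)=z\,\Phi(a)\,z\,\Phi(b)\,z\,\Phi(a)\,z,$$
again substitute $\Phi(b)=\Phi(1)\Psi(b)\Phi(1)$, and use the two absorption identities on the factors $\Phi(a)\,z\Phi(1)$ and $\Phi(1)z\,\Phi(a)$ to collapse the middle to $z\,\Phi(a)\Psi(b)\Phi(a)\,z=z\,\Phi(aba)\,z=\Theta(aba)$.

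I do not expect a genuine obstacle here; the content is entirely in recognizing the correct normalization. The one point to watch is the bookkeeping: the factors $z\Phi(1)$ and $\Phi(1)z$ produced when $\Phi(b)$ is unfolded must land exactly adjacent to a $\Phi(a)$ on the correct side for the absorption identities of Lemma \ref{lem1} to apply, and this is precisely why conjugating by $z$ (rather than multiplying on one side only, as a bare pg-inverse does) is what symmetrizes the pair and forces $\Phi$ to be a pg-inverse of $\Theta$ as well.
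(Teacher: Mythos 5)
Your proof is correct and essentially identical to the paper's: both verify the two Jordan-triple identities for the pair $(\Phi,\Theta)$ by combining the specialization $\Phi(b)=\Phi(1)\Psi(b)\Phi(1)$ (the defining identity at $a=1$) with the absorption identities $\Phi(1)z\,\Phi(x)=\Phi(x)=\Phi(x)\,z\Phi(1)$ from Lemma \ref{lem1}. The only cosmetic difference is direction: you expand the right-hand sides $\Phi(a)\Theta(b)\Phi(a)$ and $\Theta(a)\Phi(b)\Theta(a)$, whereas the paper unfolds the left-hand sides $\Phi(aba)$ and $\Theta(aba)$, so the two chains of substitutions coincide when read in opposite order.
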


\begin{proof} Let $z$ be a generalized inverse of $\Phi(1),$ and let $\Psi$ be a pg-inverse of $\Phi.$
We set $\Theta =L_{z}\circ R_{z}\circ\Phi.$ By applying Lemma \ref{lem1} with $x=z$, we get
$$ \Theta(aba)  =z \Phi(aba)z =z \Phi(a)\Psi(b)\Phi(a)z $$ $$= z \Big(\Phi(a)z\Phi(1)\Big)\Psi(b) \Big(\Phi(1)z\Phi(a)\Big)z$$
$$= \Big( z \Phi(a)z\Big)\Big(\Phi(1)\Psi(b) \Phi(1)\Big) \Big(z\Phi(a)z\Big)= \Theta (a)\Phi(b)\Theta(a).$$
On the other hand, we also have
$$ \Phi(aba)  = \Phi(a)\Psi(b)\Phi(a)=  \Big(\Phi(a)z\Phi(1)\Big)\Psi(b)\Big(\Phi(1)z\Phi(a)\Big)$$
$$= \Phi(a)\Big(z(\Phi(1)\Psi(b)\Phi(1))z\Big)\Phi(a)= \Phi(a)\Big(z\Phi(b)z\Big)\Phi(a)$$ $$= \Phi(a)\Theta(b)\Phi(a).$$
\end{proof}

Let $A$ and $B$ be complex Banach algebras. We recall that a linear map $T : A\to B$ is called a Jordan homomorphism if $T(a^2) = T(a)^2$ for every $a\in A$, or equivalently, $T(a \circ b ) = T(a)\circ T(b)$, where $\circ$ denotes the natural Jordan product defined by $x\circ y := \frac12 ( xy + yx)$. For each $a$ in $A$ the mapping $U_a : A\to A$ is given by $U_a (x) := 2(a\circ x) \circ a - a^2 \circ x = a x a$. It is well known that a Jordan homomorphism satisfies the identity $T(aba)= T(U_a (b))= U_{T(a)} (T(b)) = T(a)T(b)T(a),$ for all $a, b\in A.$\smallskip

We can now complete the statement in the above proposition. If $\Psi:A\rightarrow B$ is normalized-pg-inverse of a linear mapping $\Phi:A\rightarrow B$, by Proposition \ref{prop1} $\Psi (1)$ is a generalized inverse of $\Phi(1)$ and we clearly have $$\Psi (x) = \Psi (1) \Phi (x) \Psi (1),$$ for all $x\in A$.

\begin{lemma}\label{lem24}
Let $\Phi,\Psi:A\rightarrow B$ be linear maps between Banach algebras, with $A$ unital. Suppose that $(\Phi,\Psi)$ is Jordan-triple multiplicative. Then the following statements hold:
\begin{enumerate}[$(a)$]
\item The identities
$${\Psi(1)}  \Phi (a)= \Psi (a) \Phi(1), \quad
\Phi (a) \Psi(1)=\Phi(1) \Psi (a),$$
$$\Phi(a)\Psi(b)=\Phi(1)\Psi(a)\Phi(b)\Psi(1), \hbox{ and } \Psi(1) \Phi(a)\Psi(b) \Phi(1) =\Psi(a)\Phi(b),$$ hold for all $a,b\in A;$
 \item The linear maps $T=L_{\Psi(1)}\circ \Phi$ and $S=R_{\Psi(1)}\circ \Phi$ are Jordan homomorphisms satisfying:
    $$\Phi(a)\Psi(b)=S(a)S(b),\text{ and }\Psi(a)\Phi(b)=T(a)T(b),$$ for all $a,b\in A.$
\end{enumerate}
\end{lemma}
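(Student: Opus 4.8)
The plan is to fix the notation $p=\Phi(1)$ and $q=\Psi(1)$ and to reduce everything to a short list of algebraic relations among $p$, $q$ and the values of $\Phi,\Psi$. First I would record the structural facts already available. Applying Lemma \ref{l basic pg inv}$(a)$ with $a=b=1$, and using that $1$ is its own generalized inverse in the unital algebra $A$, shows that $q=\Psi(1)$ is a generalized inverse of $p=\Phi(1)$, i.e. $pqp=p$; by the symmetry built into the Jordan-triple-multiplicative hypothesis the same argument with the roles of $\Phi$ and $\Psi$ exchanged gives $qpq=q$, so $q$ is a normalized generalized inverse of $p$. Next, putting $a=1$ in the two pg-inverse identities yields $\Phi(x)=p\,\Psi(x)\,p$ and $\Psi(x)=q\,\Phi(x)\,q$ for all $x\in A$. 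Finally, Lemma \ref{lem1} applied to $\Phi$ with $z=q$ gives the one-sided fixing relations $\Phi(x)=pq\,\Phi(x)=\Phi(x)\,qp$, and the same lemma applied to $\Psi$ with $z=p$ gives $\Psi(x)=qp\,\Psi(x)=\Psi(x)\,pq$.

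With these relations in hand, part $(a)$ becomes bookkeeping. For the first identity I would compute $q\Phi(a)=q\,(p\Psi(a)p)=(qp\,\Psi(a))\,p=\Psi(a)\,p$, absorbing the factor $qp$ via the fixing relation for $\Psi$; the second identity $\Phi(a)q=p\Psi(a)$ follows symmetrically by absorbing $pq$ on the right. Substituting $\Psi(b)=q\Phi(b)q$ and then invoking the second identity gives the third, $\Phi(a)\Psi(b)=\Phi(a)q\,\Phi(b)q=p\Psi(a)\,\Phi(b)q$, while substituting $\Phi(b)=p\Psi(b)p$ together with the first identity gives the fourth, $q\Phi(a)\Psi(b)p=\Psi(a)p\Psi(b)p=\Psi(a)\Phi(b)$.

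For part $(b)$ I would simply read off that $S(x)=\Phi(x)q$ and $T(x)=q\Phi(x)$. The two product formulas are then immediate substitutions, $S(a)S(b)=\Phi(a)q\,\Phi(b)q=\Phi(a)\Psi(b)$ and $T(a)T(b)=q\Phi(a)\,q\Phi(b)=\Psi(a)\Phi(b)$, each time using $\Psi(y)=q\Phi(y)q$. The point I expect to be the crux is confirming that $S$ and $T$ are genuine Jordan homomorphisms rather than merely Jordan-triple multiplicative, since the latter is strictly weaker; but I expect this to collapse once one notes that setting $b=1$ in the pg-inverse identity gives $\Phi(a^2)=\Phi(a)\,q\,\Phi(a)$, whence $S(a^2)=\Phi(a^2)q=\Phi(a)q\Phi(a)q=S(a)^2$ and likewise $T(a^2)=q\Phi(a^2)=q\Phi(a)q\Phi(a)=T(a)^2$. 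Thus the genuine content is concentrated in assembling the normalized-generalized-inverse and fixing relations of the first paragraph; after that the Jordan property and the four identities follow by direct manipulation, the only thing to watch being the careful placement of the idempotents $pq$ and $qp$ on the correct side.
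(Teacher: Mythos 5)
Your proof is correct and follows essentially the same route as the paper's: both reduce everything to the relations $\Phi(x)=\Phi(1)\Psi(x)\Phi(1)$, $\Psi(x)=\Psi(1)\Phi(x)\Psi(1)$, the fact that $\Psi(1)$ is a normalized generalized inverse of $\Phi(1)$, and the fixing identities from Lemma \ref{lem1}, and then obtain both $(a)$ and $(b)$ by direct substitution. Your only (harmless) deviation is in verifying the Jordan property in $(b)$: you set $b=1$ in $\Phi(aba)=\Phi(a)\Psi(b)\Phi(a)$ to get $\Phi(a^2)=\Phi(a)\Psi(1)\Phi(a)$, whereas the paper expands $2a^2=aa1+1aa$ through the linearized identity --- the same computation in a slightly different guise.
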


\begin{proof}$(a)$ We know from previous results that $\Phi (a) = \Phi (1) \Psi (a) \Phi (1),$ and $\Psi (a) = \Psi (1) \Phi (a) \Psi (1),$ for all $a\in A$, where $\Phi(1)$ is a normalized generalized inverse of $\Psi(1)$. We conclude from Lemma \ref{lem1} that $${\Psi(1)}  \Phi (a) = \Psi (1) \Phi (1) \Psi (a) \Phi (1) = \Psi (a) \Phi (1),$$ and $$ \Phi (a) \Psi(1) = \Phi (1) \Psi (a) \Phi (1) \Psi(1) = \Phi (1) \Psi (a),$$ for all $a\in A$. Consequently, $$ \Phi(a)\Psi(b) = \Phi (1) \Psi (a) \Phi (1) \Psi (1) \Phi (b) \Psi (1) = \Phi (1) \Psi (a)  \Phi (b) \Psi (1).$$ The remaining identity follows by similar arguments.\smallskip

$(b)$ With the notation above, $T(a) T(b) = \Psi(1) \Phi(a) \Psi(1) \Phi(b) = \Psi(a) \Phi(b),$ and consequently, $$2 T(a^2)= 2 \Psi(1) \Phi(a^2) = \Psi(1) \Phi(aa 1 + 1aa)$$ $$ = \Psi(1) \Phi(a)\Psi(a)\Phi(1) + \Psi(1) \Phi(1)\Psi(a)\Phi(a) = 2 \Psi(a)\Phi(a) = 2 T(a)^2.$$ The rest is left to the reader.
\end{proof}

The previous properties are now subsumed in an equivalence.

\begin{proposition}\label{prop2}
Let $\Phi:A\rightarrow B$ be a linear map between complex Banach algebras with $A$ unital. Then the following statements are equivalent:
\begin{enumerate}[$(a)$]
    \item $\Phi$ admits a normalized-pg-inverse;
    \item There exists a Jordan homomorphism  $T:A\rightarrow B$ such that $\Phi= R_{\Phi(1)}\circ T$ and $\Phi(1) B=T(1) B;$
    \item There exists a Jordan homomorphism $S:A\rightarrow B$ such that $\Phi=L_{\Phi(1)}\circ S$ and $B \Phi(1)=B S(1).$
\end{enumerate}
\end{proposition}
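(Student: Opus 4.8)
The plan is to prove $(a)\Leftrightarrow(b)$ in detail and to obtain $(a)\Leftrightarrow(c)$ by the obvious left--right symmetry (swapping left and right multiplications and the corresponding one-sided ideals). It helps to read the two structural hypotheses concretely: $\Phi=R_{\Phi(1)}\circ T$ says $\Phi(a)=T(a)\,\Phi(1)$ for all $a\in A$, while $\Phi=L_{\Phi(1)}\circ S$ says $\Phi(a)=\Phi(1)\,S(a)$.

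For $(a)\Rightarrow(b)$ I would take a normalized-pg-inverse $\Psi$ of $\Phi$. Putting $a=b=1$ in the defining identity gives $\Phi(1)=\Phi(1)\Psi(1)\Phi(1)$, so $\Psi(1)$ is a generalized inverse of $\Phi(1)$ and Lemma~\ref{lem1} applies with $z=\Psi(1)$. Its right-hand form reads $\Phi(a)=\Phi(a)\Psi(1)\Phi(1)$, that is, $\Phi=R_{\Phi(1)}\circ S$ for the map $S:=R_{\Psi(1)}\circ\Phi$, which is a Jordan homomorphism by Lemma~\ref{lem24}$(b)$. Since $S(1)=\Phi(1)\Psi(1)$, the inclusion $S(1)B\subseteq\Phi(1)B$ is immediate, and the reverse one follows from $\Phi(1)b=\Phi(1)\Psi(1)\,\Phi(1)b\in\Phi(1)\Psi(1)B=S(1)B$; hence $\Phi(1)B=S(1)B$ and $(b)$ holds with $T:=S$.

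The real content is the converse $(b)\Rightarrow(a)$, and this is where I expect the only genuine difficulty: guessing the right formula for $\Psi$ and isolating the two facts that make it work. Given a Jordan homomorphism $T$ with $\Phi(a)=T(a)\Phi(1)$ and $\Phi(1)B=T(1)B$, I first record that $e:=T(1)$ is idempotent and that $T(1)\circ T(a)=T(a)$; multiplying $T(1)T(a)+T(a)T(1)=2T(a)$ by $e$ on the left and on the right forces $eT(a)=T(a)e=T(a)$, so $T(1)$ is a two-sided unit on the range of $T$. Next, $\Phi(1)B=T(1)B$ gives $T(1)\in\Phi(1)B$, so I may choose $c\in B$ with $T(1)=\Phi(1)c$; together with $\Phi(1)=T(1)\Phi(1)$ this yields $\Phi(1)c\Phi(1)=\Phi(1)$, so that $c$ is a generalized inverse of $\Phi(1)$ satisfying the key relation $\Phi(1)c=T(1)$.

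With these in hand I set $\Psi(b):=c\,T(b)$ and check that $(\Phi,\Psi)$ is Jordan-triple multiplicative, using only $\Phi(1)c=T(1)$, the unit property $T(1)T(x)=T(x)$, and $T(aba)=T(a)T(b)T(a)$. Both identities then telescope: $\Phi(a)\Psi(b)\Phi(a)=T(a)(\Phi(1)c)T(b)T(a)\Phi(1)=T(a)T(b)T(a)\Phi(1)=T(aba)\Phi(1)=\Phi(aba)$, and likewise $\Psi(a)\Phi(b)\Psi(a)=cT(a)T(b)(\Phi(1)c)T(a)=c\,T(a)T(b)T(a)=c\,T(aba)=\Psi(aba)$. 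Thus $\Psi$ is a normalized-pg-inverse of $\Phi$, proving $(a)$. Finally $(a)\Leftrightarrow(c)$ is the left--right mirror of the above: one uses $S:=L_{\Psi(1)}\circ\Phi$ together with the left-hand form $\Phi(a)=\Phi(1)S(a)$ of Lemma~\ref{lem1}, extracts $d\in B$ with $S(1)=d\,\Phi(1)$ from $B\Phi(1)=BS(1)$, and sets $\Psi(b):=S(b)\,d$, after which the same telescoping verifies the two Jordan-triple identities.
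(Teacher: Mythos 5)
Your proposal is correct and follows essentially the same route as the paper: the forward direction applies Lemma~\ref{lem1} and Lemma~\ref{lem24} to produce the Jordan homomorphism (your $R_{\Psi(1)}\circ\Phi$ equals the paper's $L_{\Phi(1)}\circ\Psi$ by Lemma~\ref{lem24}$(a)$), and the converse uses the identical construction $c\in B$ with $T(1)=\Phi(1)c$ and $\Psi=L_c\circ T$. If anything, your version is slightly more self-contained, since you verify both Jordan-triple identities for $\Psi$ directly, whereas the paper checks only that $\Psi$ is a pg-inverse of $\Phi$ and leaves the normalization to Proposition~\ref{prop1}.
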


\begin{proof} $(a)\Rightarrow (b)$ Suppose that $\Phi$ admits a normalized-pg-inverse $\Psi:A\rightarrow B$. By Lemma \ref{lem24} the mapping $T=L_{\Phi(1)}\circ \Psi$ is a Jordan homomorphism and $R_{\Phi(1)}\circ T (a) = \Phi(1) \Psi(a) \Phi(1)  = \Phi (a),$ or every $a\in A$. On the other hand, $T(1)=\Phi(1)\Psi(1)$ is an idempotent in $B$ and $T(1)\Phi(1)=\Phi(1),$ which implies that $T(1) B=\Phi(1)B.$\smallskip

\noindent$(b)\Rightarrow (a)$ Let $T:A\rightarrow B$ be a Jordan homomorphism such that $\Phi = R_{\Phi(1)}\circ T$ and $\Phi(1) B=T(1) B.$ Under these hypothesis, there exists $c\in B$ such that $T(1) = T(1)^2 =\Phi(1) c.$ The element $T(1)$ is an idempotent in $B$ with $T(a)\circ T(1) = T(a)$, for every $a\in A$. Thus, $T(a) = T(1) T(a) = T(a) T(1) = T(1) T(a) T(1)$, for every $a$ in $A$. If we set $\Psi=L_{c}\circ T,$ by applying Lemma \ref{lem1}, we obtain
$$\Phi(aba)= T(aba) \Phi(1)= T(a)T(b)T(a) \Phi(1) = T(a)T(1) T(b)T(a) \Phi(1) $$ $$= T(a)[\Phi(1)c]T(b)T(a)\Phi(1)=\Phi(a)\Psi(b)\Phi(a);\quad \forall\;a,b\in A.$$
The implications $(a)\Rightarrow (c)$ and $(c)\Rightarrow (a)$ follow by similar arguments.
 \end{proof}

\begin{example}\label{example BurMarMoPe}\cite[Remark 5.10]{BurMarMoPe2016} Let $H$ be an infinite dimensional complex Hilbert space, let $v,w$ be (maximal) partial isometries such that $v^*v=1= w^*w$ and $vv^*\perp ww^*$. We set $A= \mathbb{C}\oplus^{\infty}\mathbb{C},$ and consider the operator $T: A \to B(H)$ given by $$T(\lambda,\mu) = \frac{\lambda}{2} (v+w) + \frac{\mu}{2} (v-w).$$ It is shown in \cite[Remark 5.10]{BurMarMoPe2016} that $T$ maps extreme point of the closed unit ball of $A$ to extreme point of the closed unit ball of $B(H),$ but $T$ does not preserves Moore-Penrose inverses strongly, that is, $T(a^{\dag})\neq T(a)^{\dag}$ for every Moore-Penrose invertible element $a\in A$.\smallskip

Let us show that $T$ is a weak preserver, that is, $T$ maps regular elements to regular elements. It is easy to check that an element $a=(\lambda,\mu)\in A$ is regular if and only if it is Moore-Penrose invertible if and only if $|\lambda|+|\mu|\neq 0$ (i.e. $a\neq 0$), and in such a case $a^{\dag} = (\lambda^{-1},0)$ if $\mu=0$, $a^{\dag} = (0,\mu^{-1})$ if $\lambda=0$ and $a^{\dag}=a^{-1}$ otherwise. %When $\mu=0$ {\rm(}respectively, $\lambda =0${\rm)}, $T(a) = \frac{\lambda}{2} (v+w),$ and $T(a)^* T(a) = \frac{|\lambda|^2}{4} (v+w)^* (v+w) = \frac{|\lambda|^2}{4} (v^*v+w^* w) = \frac{|\lambda|^2}{4} 1$ {\rm(}respectively, $T(a) = \frac{\mu}{2} (v-w),$ and $T(a)^* T(a) = \frac{|\mu|^2}{4} (v-w)^* (v-w) = \frac{|\mu|^2}{4} (v^*v+w^* w) = \frac{|\mu|^2}{4} 1${\rm)} is a non-zero multiple of $1$, and hence $T(a)$ admits a Moore-Penrose inverse. Finally,
Given $\lambda, \mu\in \mathbb{C}$ we have $$T(a)^* T(a) = \left( \frac{\overline{\lambda}}{2} (v+w)^* + \frac{\overline{\mu}}{2} (v-w)^*\right) \left(\frac{\lambda}{2} (v+w) + \frac{\mu}{2} (v-w)\right) $$ $$= \left(\frac{|\lambda|^2}{4} + \frac{|\mu|^2}{4}\right) (v^*v+w^* w)= \left(\frac{|\lambda|^2}{4} + \frac{|\mu|^2}{4}\right) 1,$$ which assures that $T(a)$ admits a Moore-Penrose inverse.\smallskip

We shall finally show that $T$ does not admit a pg-inverse. Arguing by contradiction, we assume that $T$ admits a pg-inverse. Proposition \ref{prop1} assures that $T$ admits a normalized-pg-inverse and Proposition \ref{prop2}$(c)$ implies the existence of a Jordan homomorphism $J: A\to B(H)$ such that $T(a) = T(1) J (a),$ for every $a\in A$. Having in mind that $T(1) = T(1,1) = v$, we have $T(\lambda, \mu) = v J(\lambda,\mu),$ for every $\lambda,\mu\in \mathbb{C}.$ Therefore $J(\lambda,\mu) = v^* v J(\lambda,\mu) = v^* T(\lambda,\mu)$, for every $\lambda,\mu\in \mathbb{C}$, and thus $$ \frac{\lambda^2 +\mu^2}{2} 1 =  v^* (\frac{\lambda^2 +\mu^2}{2} v + \frac{\lambda^2 -\mu^2}{2} w)=v^* T(\lambda^2,\mu^2) =v^* T((\lambda,\mu)^2) $$ $$= (v^* T(\lambda,\mu)) (v^* T(\lambda,\mu)) = v^* (\frac{\lambda +\mu}{2} v + \frac{\lambda -\mu}{2} w) v^* (\frac{\lambda +\mu}{2} v + \frac{\lambda -\mu}{2} w)$$ $$= \frac{\lambda +\mu}{2} 1 \frac{\lambda +\mu}{2} 1= \frac{(\lambda +\mu)^2}{4} 1,$$ for every $\lambda,\mu\in \mathbb{C}$, which is impossible.
\end{example}

It is known that we can find an infinite dimensional complex Banach algebra $A$ and an unbounded homomorphism $\pi : A\to \mathbb{C}$. Clearly $\pi$ admits a normalized-pg-inverse but it is not continuous. However, every homomorphism $\pi$ from an arbitrary
complex Banach algebra $A$ into a C$^*$-algebra $B$ whose image is a $^*$-subalgebra of $B$ is automatically continuous (see \cite[Theorem 4.1.20]{Rick60}).

We can relax the hypothesis of $A$ being unital at the cost of assuming the continuity of $\Phi$ and $\Psi$. Henceforth, the bidual of a Banach space $X$ will be denoted by $X^{**}$.

\begin{lemma}\label{l bitransposed} Let $\Phi,\Psi:A\rightarrow B$ be continuous linear maps between C$^*$-algebras. Suppose that $\Psi$ is a {\rm(}normalized-{\rm)}pg-inverse of $\Phi$. Then $\Psi^{**}: A^{**}\to B^{**}$ is a {\rm(}normalized-{\rm)}pg-inverse of $\Phi^{**}$.
\end{lemma}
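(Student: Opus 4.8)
The plan is to verify the defining identity for a pg-inverse directly on the bidual by a weak$^*$-density argument, extending one variable at a time. Two standard facts drive the proof. First, for any bounded linear map $T$ between Banach spaces the bitranspose $T^{**}$ is continuous from the weak$^*$ topology of the domain to the weak$^*$ topology of the codomain, being the adjoint of $T^*$. Second, for a C$^*$-algebra $A$ the bidual $A^{**}$ is its enveloping von Neumann algebra, whose (Arens) product extends the product of $A$ and is separately weak$^*$ continuous (Sherman--Takeda). In particular $\Phi^{**}$ and $\Psi^{**}$ are weak$^*$--weak$^*$ continuous and restrict to $\Phi$ and $\Psi$ on the canonical copy of $A$.

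Rather than work with the quadratic identity $\Phi(aba)=\Phi(a)\Psi(b)\Phi(a)$, in which $a$ appears twice, I would use its equivalent polarized form noted after Definition \ref{de pseudo inverses},
\[
\Phi(abc + cba)=\Phi(a)\Psi(b)\Phi(c) + \Phi(c)\Psi(b)\Phi(a),
\]
valid for all $a,b,c\in A$, since this is trilinear and hence amenable to a variable-by-variable extension. The goal is to establish the same identity with $\Phi,\Psi$ replaced by $\Phi^{**},\Psi^{**}$ and all products taken in $A^{**}$ and $B^{**}$; specializing $c=a$ then recovers the pg-inverse identity on $A^{**}$.

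The extension proceeds in three steps. Fix $b,c\in A$ and let $a\in A^{**}$; by Goldstine's theorem choose a bounded net $(a_\lambda)\subseteq A$ with $a_\lambda\to a$ weak$^*$. Separate weak$^*$ continuity of the product of $A^{**}$ gives $a_\lambda bc + cba_\lambda\to abc + cba$ weak$^*$, and applying the weak$^*$--weak$^*$ continuous map $\Phi^{**}$ (which agrees with $\Phi$ along the net) together with separate weak$^*$ continuity of the product of $B^{**}$ yields
\[
\Phi^{**}(abc + cba)=\Phi^{**}(a)\Psi(b)\Phi(c) + \Phi(c)\Psi(b)\Phi^{**}(a).
\]
Repeating the argument first in the variable $b$ (with $a\in A^{**}$, $c\in A$ fixed) and then in the variable $c$ (with $a,b\in A^{**}$ fixed), each time invoking Goldstine's theorem and the separate weak$^*$ continuity of both products, upgrades the identity to all of the bidual:
\[
\Phi^{**}(abc + cba)=\Phi^{**}(a)\Psi^{**}(b)\Phi^{**}(c) + \Phi^{**}(c)\Psi^{**}(b)\Phi^{**}(a),\qquad a,b,c\in A^{**}.
\]
Setting $c=a$ shows that $\Psi^{**}$ is a pg-inverse of $\Phi^{**}$. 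For the normalized case the hypothesis is symmetric in $\Phi$ and $\Psi$, so the identical argument shows $\Phi^{**}$ is a pg-inverse of $\Psi^{**}$, whence $(\Phi^{**},\Psi^{**})$ is Jordan-triple multiplicative.

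The only real subtlety, and the step I would be most careful about, is the bookkeeping of which factors are being held fixed at each stage so that separate weak$^*$ continuity genuinely applies: at every step exactly one argument varies while the images of the remaining fixed C$^*$-algebra elements act as left and right multipliers, each of which is individually weak$^*$ continuous. Working with the trilinear form is precisely what makes this clean; attempting to pass to the limit directly in $\Phi(aba)$, where $a$ occupies two slots simultaneously, would require joint rather than merely separate weak$^*$ continuity of the products and is not justified.
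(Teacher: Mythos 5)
Your proof is correct and follows essentially the same route as the paper's: polarize to the trilinear identity, approximate by bounded nets via Goldstine's theorem, and pass to weak$^*$ limits using the weak$^*$-to-weak$^*$ continuity of $\Phi^{**},\Psi^{**}$ and the separate weak$^*$ continuity of the products of $A^{**}$ and $B^{**}$ (Sakai/Sherman--Takeda). The only difference is presentational: you make explicit the one-variable-at-a-time limit bookkeeping that the paper compresses into ``taking weak$^*$-limits in $\lambda$, $\mu$ and $\delta$.''
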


\begin{proof} The maps $\Phi^{**},\Psi^{**}: A^{**}\to B^{**}$ are weak$^*$-to-weak$^*$ continuous operators between von Neumann algebras. We recall that, by Sakai's theorem (see \cite[Theorem 1.7.8]{S}), the products of $A^{**}$ and $B^{**}$ are separately weak$^*$-continuous. Let us fix $a,b,c\in A^{**}$. By Goldstine's theorem we can find three bounded nets $(a_{\lambda})$, $(b_{\mu})$ and $(c_{\delta})$ in $A$ converging in the weak$^*$-topology of $A^{**}$ to $a,b$ and $c$, respectively. By hypothesis, $$\Phi (a_{\lambda} b_{\mu}  c_{\delta}+  c_{\delta} a_{\lambda} b_{\mu}) = \Phi (a_{\lambda}) \Psi( b_{\mu}) \Phi(c_{\delta})+ \Phi(c_{\delta}) \Phi (a_{\lambda}) \Psi( b_{\mu}),$$ %and $$\Psi (a_{\lambda} b_{\mu}  c_{\delta} +  c_{\delta} a_{\lambda} b_{\mu}) = \Psi (a_{\lambda}) \Phi( b_{\mu}) \Psi(c_{\delta}) +\Psi(c_{\delta}) \Psi (a_{\lambda}) \Phi( b_{\mu}) ,$$
for every $\lambda,\mu$ and $\delta$. Taking weak$^*$-limits in $\lambda$, $\mu$ and $\delta$ we get $$\Phi^{**} (abc + cba) = \Phi^{**} (a) \Psi^{**}( b) \Phi^{**} (c)+ \Phi^{**}(c) \Phi^{**} (a) \Psi^{**}( b).$$ %and $$\Psi^{**} (abc + cba) = \Psi^{**} (a) \Phi^{**}( b) \Psi^{**}(c) +\Psi^{**}(c) \Psi^{**} (a) \Phi^{**}( b).$$
\end{proof}

Combining Proposition \ref{prop2} with Lemma \ref{l bitransposed} we get the following.

\begin{corollary}\label{c prop2 continuity instead unital}
Let $\Phi:A\rightarrow B$ be a continuous linear operator between C$^*$-algebras. Then the following statements are equivalent:
\begin{enumerate}[$(a)$]
    \item $\Phi$ admits a continuous normalized-pg-inverse;
    \item There exists a continuous Jordan homomorphism  $T:A^{**}\rightarrow B^{**}$ such that $\Phi= R_{\Phi^{**}(1)}\circ T$ and $\Phi^{**}(1) B^{**}=T(1) B^{**};$
    \item There exists a continuous Jordan homomorphism $S:A^{**}\rightarrow B^{**}$ such that $\Phi=L_{\Phi^{**}(1)}\circ S$ and $B^{**} \Phi^{**}(1)=B^{**} S(1).$ $\hfill\Box$
\end{enumerate}
\end{corollary}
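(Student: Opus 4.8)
The plan is to reduce everything to the unital situation already settled in Proposition \ref{prop2} by passing to the second duals. The biduals $A^{**}$ and $B^{**}$ are von Neumann algebras, hence \emph{unital} C$^*$-algebras, and $\Phi^{**}:A^{**}\to B^{**}$ is a continuous (in fact weak$^*$-continuous) linear map with $\Phi^{**}|_{A}=\Phi$ under the canonical embeddings. The bridge between the two levels is Lemma \ref{l bitransposed}, which guarantees that Jordan-triple multiplicativity is inherited by the bitransposes. Thus the whole corollary should follow by applying Proposition \ref{prop2} to $\Phi^{**}$ and then restricting the resulting identities back to $A$.

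For the implications $(a)\Rightarrow(b)$ and $(a)\Rightarrow(c)$ I would argue as follows. Let $\Psi$ be a continuous normalized-pg-inverse of $\Phi$. By Lemma \ref{l bitransposed}, $\Psi^{**}$ is a normalized-pg-inverse of $\Phi^{**}$, so $(\Phi^{**},\Psi^{**})$ is Jordan-triple multiplicative with $A^{**}$ unital. Proposition \ref{prop2}$(a)\Rightarrow(b),(c)$ applied to $\Phi^{**}$ then produces Jordan homomorphisms $T=L_{\Phi^{**}(1)}\circ\Psi^{**}$ and $S=R_{\Phi^{**}(1)}\circ\Psi^{**}$ on $A^{**}$ satisfying $\Phi^{**}=R_{\Phi^{**}(1)}\circ T=L_{\Phi^{**}(1)}\circ S$ together with the range conditions $\Phi^{**}(1)B^{**}=T(1)B^{**}$ and $B^{**}\Phi^{**}(1)=B^{**}S(1)$. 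Their continuity is immediate, being compositions of the bounded map $\Psi^{**}$ with left and right multiplication operators. Restricting $\Phi^{**}=R_{\Phi^{**}(1)}\circ T$ to $A\subseteq A^{**}$ and using $\Phi^{**}|_{A}=\Phi$ yields precisely $(b)$, and symmetrically $(c)$.

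For the reverse direction $(b)\Rightarrow(a)$ (with $(c)\Rightarrow(a)$ analogous) I would start from the given continuous Jordan homomorphism $T$. Its hypotheses make $\Phi^{**}(1)$ a regular element of $B^{**}$, and Proposition \ref{prop2}$(b)\Rightarrow(a)$ applied to $\Phi^{**}$ shows that $\Phi^{**}$ admits a normalized-pg-inverse; by Proposition \ref{prop1}, choosing a generalized inverse $z\in B^{**}$ of $\Phi^{**}(1)$ one may take it in the explicit form $\widetilde\Psi=L_{z}\circ R_{z}\circ\Phi^{**}$. On $A$ the natural candidate is then $\Psi(a)=z\,\Phi(a)\,z$. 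This map is automatically continuous, and restricting the normalized-pg-inverse identities from $A^{**}$ to $A$ (where $\Phi^{**}$ agrees with $\Phi$) shows that $\Phi(aba)=\Phi(a)\Psi(b)\Phi(a)$ and $\Psi(aba)=\Psi(a)\Phi(b)\Psi(a)$ hold as identities in $B^{**}$.

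The main obstacle is exactly that, a priori, $z\in B^{**}$, so the candidate $\Psi$ takes its values in $B^{**}$ rather than in $B$, whereas the definition of a normalized-pg-inverse of $\Phi$ demands a map $A\to B$. I expect this to be the delicate heart of the argument: since $B$ is in general only a C$^*$-subalgebra of $B^{**}$ and not an ideal, the product $z\,\Phi(a)\,z$ need not lie in $B$ for an arbitrary generalized inverse $z$. The resolution should therefore come from arranging the construction with values in $B$, that is, from producing a generalized inverse of $\Phi^{**}(1)$ that already lies in $B$. Here I would exploit that $\Phi^{**}(1)$ is the weak$^*$-limit of $\Phi(u_{\lambda})\in B$ along an approximate unit $(u_{\lambda})$ of $A$, combined with the regularity of $\Phi^{**}(1)$ and the separate weak$^*$-continuity of the product of $B^{**}$. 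Once the range of $\Psi$ is secured inside $B$, the map $\Psi$ is the desired continuous normalized-pg-inverse of $\Phi$ and $(a)$ follows.
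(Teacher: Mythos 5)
Your overall strategy is exactly the paper's: the paper's entire proof of this corollary is the single sentence ``Combining Proposition \ref{prop2} with Lemma \ref{l bitransposed} we get the following,'' and your treatment of $(a)\Rightarrow(b),(c)$ (bitranspose, apply Proposition \ref{prop2} to $\Phi^{**}$, observe that $T=L_{\Phi^{**}(1)}\circ\Psi^{**}$ is bounded, restrict to $A$) is precisely that argument. One caveat in your $(b)\Rightarrow(a)$: to invoke Proposition \ref{prop2}$(b)\Rightarrow(a)$ for $\Phi^{**}$ you need the factorization $\Phi^{**}=R_{\Phi^{**}(1)}\circ T$ on all of $A^{**}$, whereas condition $(b)$, read literally, gives it only on $A$, and $T$ is merely norm continuous, so you cannot extend by weak$^*$ density. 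This is harmless but should be said: since $aba\in A$ for $a,b\in A$, the computation in the proof of Proposition \ref{prop2} runs verbatim on elements of $A$ (write $T(1)=\Phi^{**}(1)c$, set $\Psi=L_c\circ T|_A$, and use $T(a)T(1)=T(a)$ and $T(aba)=T(a)T(b)T(a)$), so one should redo that computation rather than cite the proposition as a black box.

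The codomain issue you flag in your last paragraph is a genuine ambiguity, but your reading of its role, and your plan for resolving it, are both off. First, the paper never resolves it either: its one-line proof produces, exactly as you do, a normalized-pg-inverse with values in $B^{**}$, and in Section 3 the authors explicitly adopt that as the meaning of the phrase (Lemma \ref{l characterization of automatic continuity of a npg-inverse with Phi**(1) regular} speaks of ``a continuous normalized-pg-inverse $\Psi\colon A\to B^{**}$,'' and Theorem \ref{p automatic continuity on K(H)} and Corollary \ref{c prop2 continuity instead unital improved} produce inverses with values in $B(H_2)=K(H_2)^{**}$). Under that convention your third paragraph already finishes the proof, and the fourth is unnecessary (though under that same convention the direction $(a)\Rightarrow(b)$ would need a $B$-valued $\Psi$ for Lemma \ref{l bitransposed} to apply; this incoherence is the paper's, not yours). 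Second, and more importantly, the repair you propose, namely producing a generalized inverse of $\Phi^{**}(1)$ that lies in $B$, cannot work. Take $A=B=c_0$, let $e\in\ell_\infty=c_0^{**}$ be the characteristic function of the even integers, and let $\Phi(x)=ex$. Then $\Phi$ is a $^*$-homomorphism, so $(\Phi,\Phi)$ is Jordan-triple multiplicative and $\Phi$ is its own continuous normalized-pg-inverse with values in $c_0$; moreover $(b)$ holds with $T=\Phi^{**}=L_e$. But $\Phi^{**}(1)=e$, and any $w\in\ell_\infty$ with $ewe=e$ satisfies $w_{2n}=1$ for all $n$, hence $w\notin c_0$: no generalized inverse of $\Phi^{**}(1)$ lies in $B$, even though a $B$-valued continuous normalized-pg-inverse exists. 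So a generalized inverse inside $B$ is neither available in general nor necessary, and any argument establishing $(b)\Rightarrow(a)$ with values in $B$ (the paper does not establish this either) would have to proceed along different lines.
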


Let $A$ and $B$ be C$^*$-algebras. We recall that a linear mapping $T :A\to B$ strongly preserves Moore-Penrose invertibility (respectively, invertibility) if for each Moore-Penrose invertible (respectively, invertible) element $a\in A,$ the element $T(a)$ is Moore-Penrose invertible (respectively, invertible) and we have $T(a^{\dag}) = T(a)^{\dag}$ (respectively, $T(a^{-1})=T(a)^{-1}$). Hua's theorem (see \cite{Hua}) affirms that every unital additive map between skew fields that strongly preserves invertibility is either an isomorphism or an anti-isomorphism. Suppose $A$ is unital. In this case M. Burgos, A.C. M\'{a}rquez-Garc\'{i}a and A. Morales-Campoy establish in \cite[Theorem 3.5]{BurMarMor12} that a linear map $T: A\to B$ strongly preserves Moore-Penrose invertibility if, and only if, $T$ is a Jordan $^*$-homomorphism $S$ multiplied by a partial isometry $e$ in $B$ such that $T(a) = ee^* T(a) e^*e$ for all $a\in A$, if and only if, $T$ is a triple homomorphism (i.e. $T$ preserves triple products of the form $\{a,b,c\}:=\frac12 (ab^* c+ cb^*a)$). The problem for linear maps strongly preserving Moore-Penrose invertibility between general C$^*$-algebras remains open.\smallskip

Let $T: A\to B$ be a triple homomorphism between C$^*$-algebras. In this case $$T(a b a) = T(\{a,b^*, a\}) =\{T(a), T(b^*), T(a)\} = T(a) T(b^*)^* T(a),$$ and $$T(a^*)^* T(b) T(a^*)^* =\{T(a^*)^*, T(b)^*, T(a^*)^*\} =  \{T(a^*),T(b),T(a^*)\}^* $$ $$= T(\{a^*,b,a^*\})^*= T(a^* b^* a^*)^* = T((aba)^*)^*,$$  for all $a,b\in A.$ These identities show that $x\mapsto T(x^*)^*$ is a normalized-pg-inverse of $T$. So, when $A$ is unital, it follows from the results by Burgos, M\'{a}rquez-Garc\'{i}a and Morales-Campoy that every linear map $T : A\to B$ strongly preserving Moore-Penrose invertibility admits a normalized-pg-inverse. However, the class of linear maps admitting a normalized-pg-inverse is strictly bigger than the class of linear maps strongly preserving Moore-Penrose invertibility. For example, let $z$ be an invertible element in $B(H)$ with $z^*\neq z$, the mapping $T: B(H)\to B(H),$ $T(x) = z x z^{-1}$ is a homomorphism, and hence a Jordan homomorphism and does not strongly preserve Moore-Penrose invertibility.\smallskip

We recall that an element $e$ in a C$^*$-algebra $A$ is a \emph{partial isometry} if $e e^* e =e$. Let us observe that a C$^*$-algebra might not contain a single partial isometry. However, a famous result due to Kadison shows that the extreme points of the closed unit ball of a C$^*$-algebra $A$ are precisely the maximal partial isometries in $A$ (see \cite[Proposition 1.6.1 and Theorem 1.6.4]{S}). Therefore, every von Neumann algebra contains an abundant set of partial isometries. When a C$^*$-algebra $A$ is a regarded as a JB$^*$-triple with respect to the product given by $\{a,b,c\} = \frac12 (a b^* c + c b^* a)$, partial isometries in $A$ are exactly the fixed points of the this triple product and are called \emph{tripotents}.\smallskip

Suppose that $e$ and $v$ are non-zero partial isometries in a C$^*$-algebra $A$ such that $ e v e = e$ and $v = v e v$. Then $e =  (e e^*) v^* (e^* e)$ and $v = (vv^*) e^* (v^* v)$. This implies, in the terminology of \cite{FriRu85}, that $P_2 (e) (v^*)= (ee^*) v^* (e^*e) = e$. Since $v$ is a norm-one element, we can conclude from \cite[Lemma 1.6 or Corollary 1.7]{FriRu85} that $v^* = e + (1-ee^*) v^* (1-e^*e).$ However the identity $v = v e v $ implies that $v= e^*$.

\begin{theorem}\label{thm2} Let $\Phi, \Psi:A\rightarrow B$ be linear maps between C$^*$-algebras. Suppose that $(\Phi,\Psi)$ is Jordan-triple multiplicative. Then the following are equivalent:
\begin{enumerate}[$(a)$]
\item $\Phi$ and $\Psi$ are contractive;
\item  $\Psi(a)=\Phi(a^*)^*,$ for every $a\in A;$
\item $\Phi$ and $\Psi$ are triple homomorphisms.
\end{enumerate}
\end{theorem}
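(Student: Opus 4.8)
The plan is to establish the cycle $(a)\Rightarrow(b)\Rightarrow(c)\Rightarrow(a)$, with essentially all the work concentrated in $(a)\Rightarrow(b)$.

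The two easy arrows I would dispatch first. For $(c)\Rightarrow(a)$ I would invoke the classical fact that every triple homomorphism between JB$^*$-triples, in particular between C$^*$-algebras viewed as JB$^*$-triples, is contractive (see \cite{CaRod14}); since in $(c)$ both $\Phi$ and $\Psi$ are triple homomorphisms, both are contractive. For $(b)\Rightarrow(c)$, assume $\Psi(a)=\Phi(a^*)^*$ and substitute this into the pg-inverse identity $\Phi(aba)=\Phi(a)\Psi(b)\Phi(a)$; replacing $b$ by $b^*$ yields
$$\Phi(ab^*a)=\Phi(a)\Phi(b)^*\Phi(a),$$
that is, $\Phi(\{a,b,a\})=\{\Phi(a),\Phi(b),\Phi(a)\}$ for all $a,b$. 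Because the triple product is symmetric in its outer variables, a standard polarization $a\rightsquigarrow a+c$ in the outer slot upgrades this diagonal identity to $\Phi(\{a,b,c\})=\{\Phi(a),\Phi(b),\Phi(c)\}$, so $\Phi$ is a triple homomorphism; the same then holds for $\Psi$, since $a\mapsto\Phi(a^*)^*$ is a triple homomorphism whenever $\Phi$ is.

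For the main arrow $(a)\Rightarrow(b)$ I would first reduce to the unital case. By Lemma \ref{l bitransposed} the pair $(\Phi^{**},\Psi^{**})$ is again Jordan-triple multiplicative, both maps remain contractive, and $A^{**}$ is unital; proving $(b)$ for the biduals gives it for $\Phi,\Psi$ upon restriction to $A$. So assume $A,B$ unital. Putting $a=b=1$ in the two Jordan-triple identities shows that $e:=\Phi(1)$ and $f:=\Psi(1)$ satisfy $efe=e$, $fef=f$, with $\|e\|,\|f\|\le1$. Consequently $ef$ and $fe$ are idempotents of norm $\le1$, hence projections $p=ef$ and $q=fe$, and $e=pe=eq$, $f=qf=fp$. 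Using $f^*f\le p$, $ff^*\le q$ together with $efe=e$ (which gives $e^*pe=e^*e$ and $eqe^*=ee^*$), two-sided estimates force $e^*e=q$ and $ee^*=p$; thus $e$ is a partial isometry, and a short computation gives $f=e^*$, i.e. $\Psi(1)=\Phi(1)^*$. By Proposition \ref{prop2} there is a Jordan homomorphism $T$ with $\Phi=R_{\Phi(1)}\circ T$ and $T(1)=ef=p$; since $T(a)=pT(a)p$ and $ee^*=p$, one checks $\|\Phi(a)\|=\|T(a)e\|=\|T(a)\|$, so $T:A\to pBp$ is a unital contractive Jordan homomorphism.

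The heart of the matter, and the step I expect to be the main obstacle, is to show that this contractive unital Jordan homomorphism $T$ is $^*$-preserving. I would argue as follows: for self-adjoint $h\in A$ the elements $u_t=\exp(ith)$ are unitaries of norm one, and since a Jordan homomorphism preserves powers ($T(h^n)=T(h)^n$) and $T$ is continuous, one has $T(u_t)=\exp(it\,T(h))$; contractivity then gives $\|\exp(it\,T(h))\|\le1$ for every $t\in\mathbb{R}$. Representing $pBp$ faithfully on a Hilbert space and applying the dissipativity criterion ($\|e^{sy}\|\le1$ for all $s\ge0$ iff $y+y^*\le0$) with $y=\pm i\,T(h)$ forces $i(T(h)-T(h)^*)=0$, whence $T(h)=T(h)^*$; decomposing a general element into self-adjoint parts yields $T(a^*)=T(a)^*$ for all $a$. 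Finally, from $\Psi(a)=\Psi(1)\Phi(a)\Psi(1)=e^*T(a)$ and $\Phi(a^*)^*=(T(a^*)e)^*=e^*T(a^*)^*=e^*T(a)$ I conclude $\Psi(a)=\Phi(a^*)^*$, which proves $(b)$ and closes the cycle.
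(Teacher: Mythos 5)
Your proposal is correct, and the two easy arrows $(b)\Rightarrow(c)$ and $(c)\Rightarrow(a)$ coincide with the paper's; but your route for the main implication $(a)\Rightarrow(b)$ is genuinely different. The paper also passes to the biduals, but then runs its key argument at \emph{every} partial isometry $e\in A^{**}$: $\Phi^{**}(e)$ and $\Psi^{**}(e^*)$ are mutual generalized inverses lying in the closed unit ball of $B^{**}$, hence partial isometries by \cite[Corollary 3.6]{BurKaMoPeRa08}, and a Peirce-decomposition argument based on \cite[Lemma 1.6]{FriRu85} then gives $\Psi^{**}(e^*)=\Phi^{**}(e)^*$; specializing to projections and using spectral resolution (self-adjoint elements of a von Neumann algebra are norm limits of real combinations of mutually orthogonal projections) yields $(b)$ by linearity. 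You instead run this type of argument only at the single element $1\in A^{**}$ (and your order-theoretic computation $q\le e^*e\le q$, $p\le ee^*\le p$, $f=e^*ee^*=e^*$ is an elementary, self-contained substitute for the two citations the paper needs there), after which you shift the burden onto the factorization $\Phi=R_{\Phi(1)}\circ T$ through a Jordan homomorphism and the classical fact that a unital contractive Jordan homomorphism is a $^*$-map, proved via exponentials and dissipativity. One presentational caveat: the \emph{statement} of Proposition \ref{prop2} only provides $\Phi(1)B=T(1)B$, so to justify $T(1)=\Phi(1)\Psi(1)$ you should invoke the explicit construction $T=L_{\Phi(1)}\circ\Psi$ from Lemma \ref{lem24}, as the proof of Proposition \ref{prop2} does; this is not a gap, just a matter of citing the right result. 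As for what each approach buys: the paper's argument uses nothing beyond the triple structure, which is why it transfers almost verbatim to the JB$^*$-triple setting (Theorem \ref{thm2 triples}), where no unit exists to be exploited; your argument leans on the unit and on associativity and would not generalize that far, but it is more elementary, and it inverts the paper's logical order in an interesting way: the paper \emph{deduces} Corollary \ref{c Jordan homomorphisms} (contractive Jordan homomorphisms between C$^*$-algebras are Jordan $^*$-homomorphisms) from Theorem \ref{thm2}, whereas you prove that folklore fact directly for the corner algebra $pB^{**}p$ and use it to obtain the theorem, with no circularity since your dissipativity argument is independent of both results.
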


\begin{proof} $(a)\Rightarrow (b)$ Clearly $\Phi^{**}$ and $\Psi^{**}$ are contractive operators and by Lemma \ref{l bitransposed}, $\Psi^{**}$ is a normalized-pg-inverse of $\Phi^{**}$. Let $e$ be a partial isometry in $A^{**}$. Since
\begin{equation}\label{eq bitransposed at e} \Phi^{**}(e)=\Phi^{**}(e)\Psi^{**}(e^*)\Phi^{**}(e),\text{ and }\Psi^{**}(e^*)=\Psi^{**}(e^*)\Phi^{**}(e)\Psi^{**}(e^*),
\end{equation} we deduce that $\Psi^{**}(e^*)$ is a generalized inverse of $\Phi^{**}(e).$ Applying that  $\Phi^{**}$ and $\Psi^{**}$ are contractions, it follows that $\Phi^{**} (e)$ and $\Psi^{**} (e)$ lie in the closed unit ball of $B^{**}$ and admit normalized generalized inverses in the closed unit ball of $B^{**}.$ Corollary 3.6 in \cite{BurKaMoPeRa08} implies that $\Phi^{**} (e)$ and $\Psi^{**}(e)$ are partial isometries in $B^{**}$.  We can now deduce from \eqref{eq bitransposed at e} and the comments preceding this theorem that $\Psi^{**}(e^*)=\Phi^{**}(e)^*.$ In particular, $\Psi(p)=\Phi(p)^*,$ for every projection $p\in A^{**}.$ Since in a von Neumann algebra every self-adjoint element can be approximated in norm by a finite linear combination of mutually orthogonal projections, we get $\Psi^{**}(a)=\Phi^{**}(a)^*,$ for every $a\in A^{**}_{sa},$ and by linearity we have $\Phi^{**} (a)^*=\Psi^{**}(a^*),$ for every $a\in A^{**}$.\smallskip

$(b)\Rightarrow (c)$ Let us assume that $\Psi(a^*)=\Phi(a)^*,$ for every $a\in A.$ In this case $$\Phi \{abc\} = \frac12 \Phi (a b^* c + c b^* a) = \frac12  (\Phi(a) \Psi(b^*) \Phi(c) + \Phi(c) \Psi(b^*) \Phi(a) ) $$ $$= \frac12  (\Phi(a) \Phi(b)^* \Phi(c) + \Phi(c) \Phi(b)^* \Phi(a) ) =\{\Phi(a), \Phi(b), \Phi (c)\},$$ which shows that $\Phi$ (and hence $\Psi$) is a triple homomorphism.\smallskip

The implication $(c)\Rightarrow (a)$ follows form the fact that triple homomorphisms are contractive (see, for example, \cite[Proposition 3.4]{Harris81} or \cite[Lemma 1$(a)$]{BarDanHor88}).
\end{proof}

The fact that every contractive representation of a C$^*$-algebra (equivalently, every contractive homomorphism between C$^*$-algebras) is a $^*$-homo-morphism seems to be part of the folklore in C$^*$-algebra theory (see, for example, the last lines in the proof of \cite[Theorem 1.7]{BlePaul2001}). Actually, every contractive Jordan homomorphism between C$^*$-algebras is a Jordan $^*$-homomorphism\hyphenation{homo-morphism}; However, we do not know an explicit reference for this fact. We present next an explicit argument derived from our results. A generalization for Jordan homomorphisms between JB$^*$-algebras will be established in Corollary \ref{c contractive are symetric in JB*-algebras}.

\begin{corollary}\label{c Jordan homomorphisms} Let $A$ and $B$ be C$^*$-algebras and let $\Phi:A\rightarrow B$ be a Jordan homomorphism.  Then the following statements are equivalent:\begin{enumerate}[$(a)$] \item $\Phi $ is a contraction;
\item $\Phi$ is a symmetric map {\rm(}i.e. $\Phi$ is a Jordan $^*$-homomorphism{\rm)};
\item $\Phi$ is a triple homomorphism.
\end{enumerate}
If $A$ is unital, then the above statements are also equivalent to the following:
\begin{enumerate}[$(c)$]\item $\Phi $ \emph{strongly preserves regularity}.
\end{enumerate}
\end{corollary}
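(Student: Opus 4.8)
The plan is to recognise this corollary as an almost immediate specialization of Theorem \ref{thm2} to the diagonal pair $(\Phi,\Phi)$. First I would invoke the observation made right after Definition \ref{de pseudo inverses}: every Jordan homomorphism $\pi:A\to B$ satisfies that the couple $(\pi,\pi)$ is Jordan-triple multiplicative (indeed $\Phi(aba)=\Phi(U_a(b))=U_{\Phi(a)}(\Phi(b))=\Phi(a)\Phi(b)\Phi(a)$). Since $\Phi$ is assumed to be a Jordan homomorphism, the pair $(\Phi,\Phi)$ is therefore Jordan-triple multiplicative and $\Phi$ is its own normalized-pg-inverse. This places us exactly under the hypotheses of Theorem \ref{thm2} with $\Psi=\Phi$.

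Next I would simply read off the three equivalent conditions of Theorem \ref{thm2} in this special case. Condition $(a)$ of the theorem, that $\Phi$ and $\Psi=\Phi$ be contractive, collapses to the single requirement that $\Phi$ be a contraction, which is statement $(a)$ of the corollary. Condition $(c)$, that $\Phi$ and $\Psi=\Phi$ be triple homomorphisms, collapses to $\Phi$ being a triple homomorphism, which is statement $(c)$. The only point deserving care is condition $(b)$: with $\Psi=\Phi$ it reads $\Phi(a)=\Phi(a^*)^*$ for all $a\in A$; replacing $a$ by $a^*$ and taking adjoints gives $\Phi(a^*)=\Phi(a)^*$, that is, $\Phi$ preserves the involution. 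As $\Phi$ is already a Jordan homomorphism, this is precisely the assertion that $\Phi$ is a Jordan $^*$-homomorphism, i.e. statement $(b)$. Hence Theorem \ref{thm2} yields the equivalences $(a)\Leftrightarrow(b)\Leftrightarrow(c)$ in one stroke.

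For the supplementary equivalence in the case where $A$ is unital, I would add the statement on strong preservation of regularity by invoking \cite[Theorem 3.5]{BurMarMor12}. That result asserts, for unital $A$, that a linear map $T:A\to B$ strongly preserves Moore-Penrose invertibility if and only if it is a triple homomorphism; since in a C$^*$-algebra the regular elements are exactly the Moore-Penrose invertible ones, this identifies strong preservation of regularity with statement $(c)$, and therefore with $(a)$ and $(b)$ as well. Note that this half does not even require the Jordan-homomorphism hypothesis, so it is cited rather than reproved.

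I do not expect a genuine obstacle here, as the whole content resides in Theorem \ref{thm2} and in the cited result of Burgos, M\'arquez-Garc\'ia and Morales-Campoy; the corollary is a transcription. The one subtlety to handle with care is the bookkeeping in condition $(b)$: one must check that the identity $\Phi(a)=\Phi(a^*)^*$ supplied by Theorem \ref{thm2} is equivalent to the symmetry $\Phi(a^*)=\Phi(a)^*$, and that combining this symmetry with the standing Jordan-homomorphism hypothesis upgrades $\Phi$ to a genuine Jordan $^*$-homomorphism. It is worth emphasising, finally, that the corollary is not vacuous: the homomorphism $x\mapsto zxz^{-1}$ with $z^*\neq z$ discussed before Theorem \ref{thm2} is a Jordan homomorphism failing $(b)$, confirming that contractivity is exactly what forces symmetry.
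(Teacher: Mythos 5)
Your proposal is correct and takes essentially the same route as the paper: the paper likewise specializes Theorem \ref{thm2} to the diagonal pair $(\Phi,\Phi)$ (using that a Jordan homomorphism makes $(\Phi,\Phi)$ Jordan-triple multiplicative) and cites \cite[Theorem 3.5]{BurMarMor12} for the unital statement. The only cosmetic difference is that the paper closes the cycle by deducing $(b)\Rightarrow(c)$ from the known fact that Jordan $^*$-homomorphisms are triple homomorphisms and $(c)\Rightarrow(a)$ from the contractivity of triple homomorphisms \cite[Lemma 1$(a)$]{BarDanHor88}, whereas you read all three implications off the theorem; the underlying content is identical.
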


\begin{proof} The implication $(a)\Rightarrow (b)$ is given by Theorem \ref{thm2}. It is known that every Jordan $^*$-homomorphism is a triple homomorphism, then $(b)$ implies $(c)$. Every triple homomorphism is continuous and contractive (see \cite[Lemma 1$(a)$]{BarDanHor88}), and hence $(c)\Rightarrow (a)$.\smallskip

The final statement follows from \cite[Theorem 3.5]{BurMarMor12}.\end{proof}

It seems appropriate to clarify the connections between Corollary \ref{c Jordan homomorphisms} and previous results.  It is known that every triple homomorphism between general C$^*$-algebras strongly preserves regularity (compare \cite{BurMarMor12} and \cite{BurMarMoPe2016}). Actually, if $A$ and $B$ are C$^*$-algebras with $A$ unital, and $T : A \to B$ is a linear map, then by \cite[Theorem 3.5]{BurMarMor12}, $T$ strongly preserves regularity if, and only if, $T$ is a triple homomorphism. So, in case $A$ being unital the equivalence $(c)\Leftrightarrow (d)$ in Corollary \ref{c Jordan homomorphisms} can be established under weaker hypothesis. For a non-unital C$^*$-algebra $A$ the continuity of a linear mapping $T: A\to B$ strongly preserving regularity does not follow automatically. For example, by \cite[Remark 4.2]{BurMarMoPe2016}, we know the existence of an unbounded linear mapping $T: c_0 \to c_0$ which strongly preserves regularity. According to our knowledge, it is an open problem whether every continuous linear map strongly preserving regularity between general C$^*$-algebras is a triple homomorphism.\smallskip

%Connections with similar or related notions. ????????????

\section{Orthogonality preservers and non-unital versions}\label{sec 3}

Let $A$ be a C$^*$-algebra. We recall that an \emph{approximate unit} of $A$ is a net $(u_{\lambda})$ such that $0\leq u_{\lambda}\leq 1$ for every $\lambda$, $u_{\lambda}\leq u_{\mu}$ for every $\lambda\leq \mu,$ and $$\lim_{\lambda} \| x - x u_{\lambda}\| = \lim_{\lambda} \| x - u_{\lambda} x \| = \lim_{\lambda} \| x -  u_{\lambda} x u_{\lambda}\| =0, $$ for every $x\in A$. Every C$^*$-algebra admits an approximate unit (see \cite[Theorem 3.1.1]{Murph}).\smallskip

Let $(u_{\lambda})$ be an approximate unit in a C$^*$-algebra $A$, and let us regard $A$ as a C$^*$-subalgebra of $A^{**}$. Having in mind that a functional $\phi$ in $A^{*}$ is positive if and only if $\|\phi\| = \lim_{\lambda} \phi (u_{\lambda})$ (see \cite[Theorem 3.3.3]{Murph}), we can easily see that $(u_{\lambda})\to 1$ in the weak$^*$ topology of $A^{**}$.

\begin{lemma}\label{l non-unital with projections automatic continuity} Let $\Phi, \Psi:A\rightarrow B$ be linear maps between C$^*$-algebras. Suppose that $\Phi$ is continuous and $(\Phi,\Psi)$ is Jordan-triple multiplicative. Then the following statements hold:
\begin{enumerate}[$(a)$]
\item $\Phi^{**} (a b c + c b a ) = \Phi^{**} (a) \Psi(b) \Phi^{**} (c) + \Phi^{**} (c) \Psi(b) \Phi^{**} (a)$ for every $a,c$ in $A^{**}$, and every $b$ in $A$;
\item $\Phi (b) = \Phi^{**} (1) \Psi(b) \Phi^{**} (1) $ for every $b$ in $A$;
\item The mapping $T: A\to B^{**}$, $T(x) = \Phi^{**} (1) \Psi (x)$ satisfies $T(a) T(b) = \Phi (a) \Psi(b)$, and $\Phi (a) = T(a) \Phi^{**} (1)$, for every $a,b\in A$;
\item The mapping $S: A\to B^{**}$, $S(x) =\Psi (x) \Phi^{**} (1) $ satisfies $S(a) S(b) = \Psi (a) \Phi(b)$, and $\Phi (a) =  \Phi^{**} (1)  S(a) $, for every $a,b\in A$;
\item  Suppose that $p$ and $q$ are projections in $A$ with $p q =0$, then $T(p) T(q)  =S(p) S(q)  =0$, where $T$ and $S$ are the maps defined in previous items.
\end{enumerate}
\end{lemma}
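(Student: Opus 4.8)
The plan is to obtain $(a)$ by a bitransposition argument as in Lemma~\ref{l bitransposed}, to read off $(b)$--$(d)$ as purely algebraic consequences, and then to prove the orthogonality in $(e)$ by an idempotent-cancellation trick. Throughout write $e:=\Phi^{**}(1)\in B^{**}$.

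For $(a)$ I promote only the outer variables to the bidual. Since $\Psi$ is a pg-inverse of $\Phi$, the polarized identity $\Phi(xby+ybx)=\Phi(x)\Psi(b)\Phi(y)+\Phi(y)\Psi(b)\Phi(x)$ holds for all $x,y,b\in A$. Fixing $b\in A$ and taking $a,c\in A^{**}$, Goldstine's theorem supplies bounded nets $(a_\lambda),(c_\delta)\subset A$ converging weak$^*$ to $a,c$. As $\Phi$ is continuous, $\Phi^{**}$ is weak$^*$-to-weak$^*$ continuous, and the product of $B^{**}$ is separately weak$^*$ continuous by Sakai's theorem; taking iterated weak$^*$ limits in $\lambda$ and then in $\delta$, while holding the single factor $\Psi(b)$ fixed, yields $(a)$. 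It is precisely this fixed factor that forces $b$ to remain in $A$: no $\Psi^{**}$ is available, since $\Psi$ is not assumed continuous.

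Statements $(b)$--$(d)$ are then formal. Setting $a=c=1$ in $(a)$ gives $2\Phi(b)=2\,e\,\Psi(b)\,e$, that is $(b)$: $\Phi(x)=e\Psi(x)e$ for all $x\in A$. Using this, $T(a)T(b)=e\Psi(a)\,e\Psi(b)=\Phi(a)\Psi(b)$ and $T(a)e=e\Psi(a)e=\Phi(a)$, which is $(c)$; symmetrically $S(a)S(b)=\Psi(a)e\,\Psi(b)e=\Psi(a)\Phi(b)$ and $eS(a)=\Phi(a)$, which is $(d)$.

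The substance is $(e)$, where I use both halves of the Jordan-triple multiplicativity. By $(c)$ it suffices to prove $X:=\Phi(p)\Psi(q)=0$. Taking $a=b=q$ in $\Phi(aba)=\Phi(a)\Psi(b)\Phi(a)$ and in $\Psi(aba)=\Psi(a)\Phi(b)\Psi(a)$ (recall $q^3=q$) shows that $f_q:=\Phi(q)\Psi(q)$ is idempotent and that $\Psi(q)\Phi(q)\Psi(q)=\Psi(q)$. The polarized $\Phi$-identity with $(x,b,y)=(p,q,q)$, together with $pqq=qqp=0$, gives $\Phi(p)\Psi(q)\Phi(q)=-\Phi(q)\Psi(q)\Phi(p)=-f_q\Phi(p)$. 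I now evaluate $Xf_q=\Phi(p)\Psi(q)\Phi(q)\Psi(q)$ in two ways: grouping the last three factors gives $Xf_q=\Phi(p)\Psi(q)=X$, whereas grouping the first three gives $Xf_q=-f_q\Phi(p)\Psi(q)=-f_qX$. Hence $X=-f_qX$, i.e. $(1+f_q)X=0$; since $f_q$ is idempotent, $1+f_q$ is invertible in the unitization of $B$ (with inverse $1-\tfrac12 f_q$), so $X=0$. The claim $S(p)S(q)=\Psi(p)\Phi(q)=0$ follows by the symmetric argument with the idempotent $g_q:=\Psi(q)\Phi(q)$ and the polarized $\Psi$-identity. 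The main obstacle is locating this cancellation: the natural hope that $T$ be a Jordan homomorphism fails (from $(c)$, $T(q)^2=\Phi(q)\Psi(q)\neq T(q)$ in general), and the non-continuity of $\Psi$ blocks inserting a unit into the middle slot of $\Psi$; the idempotent $f_q$, via the invertibility of $1+f_q$, is exactly the substitute for that missing unit, and this last step uses only the identities on $A$, never the bidual.
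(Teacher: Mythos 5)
Your proof is correct, and parts (a)--(d) follow the paper's argument essentially verbatim: Goldstine's theorem plus separate weak$^*$ continuity of the von Neumann algebra product (and weak$^*$ continuity of $\Phi^{**}$) for (a), then pure algebra for (b)--(d), including your correct observation that $b$ must stay in $A$ because no $\Psi^{**}$ is available. In (e) you and the paper share the same skeleton: insert $\Psi(q)=\Psi(q)\Phi(q)\Psi(q)$ to write $X:=\Phi(p)\Psi(q)=\Phi(p)\Psi(q)\Phi(q)\Psi(q)$, then flip $\Phi(p)\Psi(q)\Phi(q)=-\Phi(q)\Psi(q)\Phi(p)$ via the polarized identity and $pqq=qqp=0$. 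You diverge only at the final cancellation. The paper finishes in one stroke: $X=-\Phi(q)\bigl(\Psi(q)\Phi(p)\Psi(q)\bigr)=-\Phi(q)\Psi(qpq)=0$, applying the $\Psi$-half of the Jordan-triple multiplicativity to $qpq=0$. You instead package the relation as $X=-f_qX$ with $f_q=\Phi(q)\Psi(q)$ idempotent and kill $X$ via invertibility of $1+f_q$ (equivalently, one can left-multiply by $f_q$ and use $f_q^2=f_q$ to get $f_qX=0$, whence $X=-f_qX=0$). Your cancellation is valid --- $(1+f_q)(1-\tfrac12 f_q)=1$ does hold --- but it is a detour: the same hypothesis that gave you idempotency of $f_q$, namely $\Psi(aba)=\Psi(a)\Phi(b)\Psi(a)$, applied with $a=q$, $b=p$ instead of $b=q$, yields $\Psi(q)\Phi(p)\Psi(q)=\Psi(qpq)=0$ directly. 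Nothing is gained or lost in generality --- both endings use only the identities on $A$, never the bidual, as you note --- so the difference is purely one of economy, and your symmetric treatment of $S(p)S(q)$ (which the paper leaves implicit) is likewise correct.
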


\begin{proof}$(a)$ Applying that $\Phi$ is continuous, the bitransposed map $\Phi^{**} : A^{**}\to B^{**}$ is weak$^*$-continuous. Let $a$ and $c$ be elements in $A^{**}$, and let $b\in A$. By Goldstine's theorem we can find bounded nets $(a_\lambda)$ and $(c_\mu)$ in $A$ converging, in the weak$^*$ topology of $A^{**}$, to $a$ and $c$, respectively. By hypothesis $$\Phi (a_\lambda  b c_\mu +  c_\mu b a_\lambda)  = \Phi (a_\lambda) \Psi(b) \Phi(c_\mu) + \Phi (c_\mu) \Psi(b) \Phi(a_\lambda),$$ for every $\lambda,\mu$. Since the product of $A^{**}$ is separately weak$^*$ continuous, the weak$^*$-continuity of $\Phi^{**}$ implies that $$\Phi^{**} (a b c + c b a ) = \Phi^{**} (a) \Psi(b) \Phi^{**} (c) + \Phi^{**} (c) \Psi(b) \Phi^{**} (a).$$

$(b)$ Follows from $(a)$ with $a=c=1$.\smallskip

$(c)$ By definition and $(b)$ we have $$ T(a) T(b) = \Phi^{**} (1) \Psi (a) \Phi^{**} (1) \Psi (b) = \Phi (a) \Psi (b),$$ and $T(a) \Phi^{**} (1) = \Phi^{**} (1) \Psi(a) \Phi^{**} (1) = \Phi (a)$, for every $a,b\in A$. The proof of $(d)$ is very similar.\smallskip

$(e)$ Let us take two projections $p,q\in A$ with $pq =0$. By definition and $(b)$ or $(c)$ we have  $$T(p) T(q)  = \Phi^{**} (1) \Psi (p) \Phi^{**} (1) \Psi (q) = \Phi(p) \Psi (q)  $$ $$= \Phi(p) \Psi (q) \Phi (q)\Psi (q)= (\Phi (pqq + qqp) - \Phi(q) \Psi (q) \Phi (p) ) \Psi (q) $$ $$ = - \Phi(q) \Psi (q) \Phi (p)  \Psi (q) = - \Phi(q) \Psi (qp q) =0.$$
\end{proof}

Let us explore some of the questions posed before. In our first proposition we shall prove that the normalized-pg-inverse of a continuous linear map on $c_0$ is automatically continuous.

\begin{proposition}\label{p automatic continuity on c0} Let $\Phi, \Psi: c_0 \to c_0$ be linear maps such that $\Phi$ is continuous and $(\Phi,\Psi)$ is Jordan-triple multiplicative. Then $\Psi$ is continuous.
\end{proposition}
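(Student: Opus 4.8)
The plan is to exploit the commutative structure of $c_0$ by working coordinatewise. Write $\pi_n : c_0 \to \mathbb{C}$ for the $n$-th coordinate evaluation (these are exactly the characters of $c_0$, associated with the minimal projections $e_n$), and set $\phi_n = \pi_n \circ \Phi$ and $\psi_n = \pi_n \circ \Psi$. Since $\Phi$ is continuous, each $\phi_n$ is a bounded functional with $\sup_n \|\phi_n\| = \|\Phi\|$, while $\|\Psi\| = \sup_n \|\psi_n\|$; thus the whole problem reduces to a uniform bound on the family $(\psi_n)$. Applying Lemma~\ref{l non-unital with projections automatic continuity} with $u := \Phi^{**}(1) \in \ell_\infty = c_0^{**}$, part $(b)$ gives $\Phi(b) = u\,\Psi(b)\,u$, that is, $\phi_n = u_n^2\, \psi_n$ for every $n$ (so in particular $\phi_n \neq 0$ forces $u_n \neq 0$), and $\|u\|_\infty \le \|\Phi\|$.

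Next I would determine the shape of each $\psi_n$. For the indices with $\phi_n = 0$ I would use that $\Phi$ is itself a pg-inverse of $\Psi$, so $\Psi(aba)=\Psi(a)\Phi(b)\Psi(a)$ yields $\psi_n(a^2 b) = \psi_n(a)^2 \phi_n(b) = 0$ for all $a,b$; since every $x \in c_0$ factors as $x = a^2 b$ with $a,b \in c_0$ (take $a_k = |x_k|^{1/3}$ and $b_k = |x_k|^{1/3}\,x_k/|x_k|$), this forces $\psi_n = 0$. For the indices with $\phi_n \neq 0$, the identity $\Phi(aba)=\Phi(a)\Psi(b)\Phi(a)$ reads $\phi_n(a^2 b) = \phi_n(a)^2 \psi_n(b)$; choosing $a_0$ with $\phi_n(a_0)\neq 0$ expresses $\psi_n(b) = \phi_n(a_0^2 b)/\phi_n(a_0)^2$ as a \emph{bounded} functional. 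Finally, the orthogonality output of Lemma~\ref{l non-unital with projections automatic continuity}$(e)$, namely $T(e_i)T(e_j)=0$ for $i\neq j$, translates (after dividing by $u_n^2\neq0$) into $\psi_n(e_i)\psi_n(e_j)=0$, so the bounded functional $\psi_n$ is supported on a single basis vector: $\psi_n = c_n\,\pi_{\sigma(n)}$ with $c_n = \psi_n(e_{\sigma(n)}) \neq 0$. Substituting this form back into $\phi_n(a^2b)=\phi_n(a)^2\psi_n(b)$ and testing on $a=b=e_{\sigma(n)}$ forces $u_n^2 c_n^2 = 1$, whence $1/\|u\|_\infty \le |c_n| = 1/|u_n|$.

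It remains to upgrade this pointwise control into a uniform bound, and this is where I would use that $\Psi$ is assumed to map \emph{into} $c_0$. From $\Psi(e_m)\in c_0$ and $\psi_n(e_m) = c_n\,\delta_{\sigma(n),m}$ one gets $c_n \to 0$ along $\sigma^{-1}(m)$; since $|c_n|$ is bounded below, each fibre $\sigma^{-1}(m)$ is finite, hence $\sigma(n)\to\infty$. Suppose, for contradiction, that $\Psi$ is unbounded, i.e. $\sup_n |c_n| = \infty$. Pick distinct indices $n_k$ with $|c_{n_k}| \to \infty$, equivalently $u_{n_k}\to 0$; after passing to a subsequence, $\sigma(n_k)\to\infty$. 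Define $x\in c_0$ by $x_{\sigma(n_k)} = |u_{n_k}|^{1/2}$ and $x_j = 0$ otherwise; this lies in $c_0$ because its nonzero entries tend to $0$ and occur at indices tending to infinity. Then $|\psi_{n_k}(x)| = |c_{n_k}|\,|u_{n_k}|^{1/2} = |u_{n_k}|^{-1/2}\to\infty$, so $\Psi(x)\notin c_0$, contradicting $\Psi(c_0)\subseteq c_0$. Therefore $\sup_n|c_n| < \infty$ and $\Psi$ is continuous.

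I expect the main obstacle to be the middle step: wringing the rigid ``weighted point-evaluation'' description $\psi_n = c_n \pi_{\sigma(n)}$ with $|u_n c_n| = 1$ out of the two pg-inverse identities together with the orthogonality relation of Lemma~\ref{l non-unital with projections automatic continuity}. Everything else is bookkeeping, but it is precisely this structural rigidity that converts the mere hypothesis $\Psi(c_0)\subseteq c_0$ into the desired uniform bound.
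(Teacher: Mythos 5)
Your proof is correct, but it takes a genuinely different route from the paper's. You analyze the \emph{row} functionals $\psi_n=\pi_n\circ\Psi$, and the heart of your argument is the rigidity statement that each nonzero $\psi_n$ is a single weighted evaluation $c_n\pi_{\sigma(n)}$ with $|c_n|=1/|u_n|\geq 1/\|\Phi\|$, where $u=\Phi^{**}(1)$. The paper instead analyzes the \emph{columns} $\Phi(e_n),\Psi(e_n)\in c_0$, showing they have pairwise disjoint finite supports with reciprocal coefficients; the two descriptions are essentially transposes of one another, and both rest on Lemma \ref{l non-unital with projections automatic continuity} (you use parts $(b)$ and $(e)$, the paper parts $(c)$ and $(e)$). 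The payoff of the row picture is that $\|\Psi\|=\sup_n\|\psi_n\|=\sup_n|c_n|$, so once finiteness of the fibres of $\sigma$ is read off from $\Psi(e_m)\in c_0$, a single explicit element $x=\sum_k |u_{n_k}|^{1/2}e_{\sigma(n_k)}$ yields an immediate contradiction with $\Psi(x)\in\ell_\infty$, and you are done. The paper's column route needs two further steps that you avoid entirely: a more delicate contradiction computation (forcing the coefficients $\mu_n=1$ of $\Psi_1(a)$) to bound $\{\|\Psi(e_n)\|\}$, and then an approximate-unit argument with Cohen factorization ($\Psi(x-q_nx)\to 0$ and $\Psi(q_nxq_n)=\Psi(q_n)\Phi(x)\Psi(q_n)$) to convert boundedness on the projections $q_n$ into boundedness of $\Psi$. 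What the paper's approach buys in exchange is robustness: it is the template that generalizes to the noncommutative case (Theorem \ref{p automatic continuity on K(H)}), whereas your argument leans on the atomic commutative structure of $c_0$, where the coordinate functionals are multiplicative. Two points you should make explicit in a final write-up: (i) you must establish boundedness of each individual $\psi_n$ \emph{before} using the orthogonality relation $\psi_n(e_i)\psi_n(e_j)=0$ to conclude $\psi_n=c_n\pi_{\sigma(n)}$, since that relation only controls $\psi_n$ on the span of the basis vectors --- your ordering is correct, but the dependence deserves emphasis; and (ii) in the final construction you need the positions $\sigma(n_k)$ to be pairwise distinct for $x$ to be well defined, which is exactly what finiteness of the fibres (plus passing to a subsequence) provides.
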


\begin{proof} We can assume that $\Phi,\Psi\neq 0$. Let $(e_n)$ be the canonical basis of $c_0$.  Applying the previous Lemma \ref{l non-unital with projections automatic continuity}$(c)$, the mapping  $T: c_{0}\to c_{0}^{**}=\ell_\infty$, $T(x) = \Phi^{**} (1) \Psi (x)$ satisfies $T(a) T(b) = \Phi (a) \Psi(b)$, and $\Phi (a) = T(a) \Phi^{**} (1)$, for every $a,b\in c_{0}$. By the just quoted lemma, $T(p) T(q) =0$ for every pair of projections $p,q\in c_0$ with $p q =0$, and consequently,  $$\Phi (p ) \Phi (q) = T(p) \Phi^{**} (1) T(q) \Phi^{**} (1) = T(p) T(q) \Phi^{**} (1) \Phi^{**} (1) =0.$$ We can therefore conclude that $\Phi (e_n) \Phi (e_m)= 0$ for every $n\neq m$ in $\mathbb{N}$. Since $\Phi (e_n) = \Phi (e_n) \Psi(e_n) \Phi (e_n)$ and $\Psi (e_n)  = \Psi (e_n) \Phi (e_n) \Psi (e_n)$, we deduce that $\Phi(e_n)$ and $\Psi(e_n)$ both are regular elements in $c_0$ and $\Phi (e_n)$ is a normalized generalized inverse of $\Psi (e_n)$. Therefore, for each natural $n$ with $\Phi (e_n)\neq 0$ there exists a finite subset $\hbox{supp} (\Phi(e_n)) = \{k^n_1,\ldots,k^{n}_{m_n}\}\subset \mathbb{N}$ and non-zero complex numbers $\{\lambda_j^n : j\in \hbox{supp} (\Phi(e_n)) \}$ with the following properties: $|\lambda_j^n|\leq \|\Phi\|$ for every $j\in \hbox{supp} (\Phi(e_n))$ and every natural $n$,   $$\hbox{supp} (\Phi(e_n)) \cap \hbox{supp} (\Phi(e_m))= \emptyset,\hbox{ for all } n\neq m,$$ and $$\Phi (e_n ) = \sum_{j\in \hbox{supp} (\Phi(e_n))} \lambda_j^n e_j, \hbox{ and } \Psi (e_n ) =\sum_{j\in \hbox{supp} (\Phi(e_n))} \frac{1}{\lambda_j^{n}} e_j, \ \ \ \forall n\in \mathbb{N}.$$

Let us observe that $\|\Psi(e_n) \| = \max\{ \frac{1}{|\lambda_j^{n}|} : j \in \hbox{supp} (\Phi(e_n)) \}$. To simplify the notation, let $j (n)\in \hbox{supp} (\Phi(e_n))$ be an element satisfying $\frac{1}{|\lambda_{j(n)}^{n}|} = \|\Psi(e_n) \|$.\smallskip

We claim that the set $\{ \|\Psi(e_n) \|: n\in \mathbb{N}\}$ must be bounded. Otherwise, we can find a subsequence $( \|\Psi(e_{\sigma(n)}) \|)$ satisfying $\frac{1}{|\lambda_{j(\sigma(n))}^{\sigma(n)}|} = \|\Psi(e_{\sigma(n)}) \| > n$ for every natural $n$. Let $\pi_2: c_0 \to c_0$ be the natural projection of $c_0$ onto the C$^*$-subalgebra generated by the elements $\{e_{j(\sigma(n))} : n\in \mathbb{N} \}$, and let $\iota : c_0 = \overline{span} \{ e_{\sigma(n)} : n\in \mathbb{N}\}\to c_0$ denote the natural inclusion. The maps $\Phi_1= \pi_2 \Phi \iota, \Psi_1=\pi_2 \Psi \iota: c_0\to c_0$ are linear maps, $\Psi$ is a normalized-pg-inverse of $\Phi,$ the latter is continuous, $\Psi_1 (e_{\sigma(n)})= \frac{1}{\lambda_{j(\sigma(n))}^{\sigma(n)}} e_{j(\sigma(n))} $, and $\Phi_1 (e_{\sigma(n)})= {\lambda_{j(\sigma(n))}^{\sigma(n)}} e_{j(\sigma(n))}$. The element $\displaystyle a = \sum_{m\in \mathbb{N}} {\lambda_{j(\sigma(m))}^{\sigma(m)}} e_{j(\sigma(m))}$ lies in $c_0$ and $\|\Psi_1 (a)\| <\infty$. Therefore $\displaystyle \Psi_1 (a) = \sum_{m\in \mathbb{N}} {\mu_{m}} e_{j(\sigma(m))}$ for a unique sequence $(\mu_m)\to 0$. Let us write $j ({\sigma} (n)) = j_1(n)$. Under these conditions $${\lambda_{j_1(n)}^{\sigma(n)}} \mu_n e_{j_1(n)} = \Psi_1 (a) \Phi_1 (e_{j_1(n)}) = \Psi_1 (a) \Phi_1 (e_{j_1(n)}) \Psi_1 (e_{j_1(n)}) \Phi_1 (e_{j_1(n)})$$ $$= \Big(\Psi_1 ( e_{j_1(n)} e_{j_1(n)} a + a e_{j_1(n)}e_{j_1(n)})- \Psi_1 (e_{j_1(n)})  \Phi_1 (e_{j_1(n)}) \Psi_1 (a) \Big) \Phi_1 (e_{j_1(n)})$$ $$ = \Psi_1 ( 2 {\lambda_{j_1(n)}^{\sigma(n)}} e_{j_1(n)}) \Phi_1 (e_{j_1(n)}) - \Psi_1 (e_{j_1(n)})  \Phi_1 (e_{j_1(n)}) \Psi_1 (a)\Phi_1 (e_{j_1(n)}) $$ $$= 2 {\lambda_{j_1(n)}^{\sigma(n)}} \Psi_1 (  e_{j_1(n)}) \Phi_1 (e_{j_1(n)}) - \Psi_1 (e_{j_1(n)})  \Phi_1 (e_{j_1(n)} a e_{j_1(n)}) $$ $$ =  2 {\lambda_{j_1(n)}^{\sigma(n)}} e_{j_1(n)}- \Psi_1 (e_{j_1(n)})  \Phi_1 ( {\lambda_{j_1(n)}^{\sigma(n)}} e_{j_1(n)}) =  {\lambda_{j_1(n)}^{\sigma(n)}} e_{j_1(n)},$$ which proves that $\mu_n =1$ for all $n,$ leading to a contradiction.\smallskip

Let $M$ be a positive bound of the set $\{ \|\Psi(e_n) \|: n\in \mathbb{N}\}$. For each natural $n$, we set $q_n :=\sum_{k=1}^{n} e_k$. Clearly, $(q_n)$ is an approximate unit in $c_0$. Since for each $n\neq m$ we have $\Phi (e_n) \Phi(e_m)=0$ (i.e., $\hbox{supp} (\Phi(e_n)) \cap \hbox{supp} (\Phi(e_m))= \emptyset$), and, for each natural $j$, $\Phi (e_j)$ is a normalized generalized inverse of $\Psi (e_j)$, we deduce that $\Psi (e_n) \Psi(e_m)=0$ (i.e., $\hbox{supp} (\Psi(e_n)) \cap \hbox{supp} (\Psi(e_m))= \emptyset$) for every $n\neq m$. Consequently, for each finite subset $F\subseteq \mathbb{N}$ we have \begin{equation}\label{boundedness of Psi qn} \left\|\Psi\left(\sum_{j\in F} e_j\right)\right\|= \max\left\{ \left\|\Psi\left(e_j\right)\right\| : j\in F\right\} \leq M,
\end{equation} and consequently  $\|\Psi(q_n)\|\leq M$, for every natural $n$.\smallskip

We shall prove next that for each $x\in c_0$ we have $$\lim_n (\Psi (x - q_n x))_n= 0.$$ Indeed, let us take $y,z,w\in c_0$ such that $x = y z w$ (in the case of $c_0$ the existence of such $y,z,w$ is almost obvious but we can always allude to Cohen's factorization theorem \cite[Theorem VIII.32.22]{HewRoss}). By assumptions $$ \Psi (x - q_n x) = \Psi (y (1 - q_n) z w ) = \Psi (y) \Phi (z - q_n z) \Psi (w).$$ Since $\Phi$ is continuous and  $((1 - q_n) z)$ tends in norm to $0$, we deduce that $\lim_n (\Psi (x - q_n x))_n= 0$ as we claimed.\smallskip

Finally, for an arbitrary $x$ in the closed unit ball of $c_0$ we have $$\Psi (q_n x) =\Psi (q_n x q_n) = \Psi (q_n) \Phi (x) \Psi (q_n),$$ and hence $\left\| \Psi (q_n x) \right\|\leq M^2 \left\|\Phi\right\|$. The norm convergence of $\Psi (q_n x)$ to $\Psi (x)$, assures that  $\left\| \Psi (x) \right\|\leq M^2 \left\|\Phi\right\|$. The arbitrariness of $x$ proves the continuity of $\Psi$.
\end{proof}

The previous proposition remains valid if $c_0$ is replaced with $c_0 (\Gamma)$.\smallskip

Our next goal is to extend the previous Proposition \ref{p automatic continuity on c0} to linear maps on $K(H)$. For that purpose we isolate first a technical results which is implicit in the proof of the just commented proposition.

\begin{lemma}\label{l characterization of automatic continuity of a npg-inverse with Phi**(1) regular} Let $\Phi, \Psi: A \to B$ be linear maps between C$^*$-algebras such that $\Phi$ is continuous and $(\Phi,\Psi)$ is Jordan-triple multiplicative. Then the following are equivalent:
\begin{enumerate}[$(1)$]
\item $\Phi$ admits a continuous normalized-pg-inverse $\Psi: A\to B^{**}$;
\item $\Phi^{**}(1)$ is a regular element in $B^{**}.$
\end{enumerate}
\end{lemma}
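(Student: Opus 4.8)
The plan is to treat the two implications separately, in both directions exploiting the factorisation $\Phi(b)=\Phi^{**}(1)\,\Psi(b)\,\Phi^{**}(1)$ for $b\in A$ provided by Lemma \ref{l non-unital with projections automatic continuity}$(b)$. Throughout I write $e:=\Phi^{**}(1)\in B^{**}$, and I note that the proof of that lemma only uses the continuity of $\Phi$ (the middle factor being a fixed element), so the factorisation is available whenever the second map is a pg-inverse of $\Phi$, irrespective of its codomain or continuity.

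For $(2)\Rightarrow(1)$ I would imitate Lemma \ref{lem1} and Proposition \ref{prop1}, but carried out inside $B^{**}$. Assume $e$ is regular and fix a generalized inverse $z\in B^{**}$ of $e$, so that $eze=e$. Combining this with $\Phi(x)=e\,\Psi(x)\,e$ gives the two one-sided reduction identities $ez\,\Phi(x)=\Phi(x)=\Phi(x)\,ze$ for every $x\in A$ (indeed $ez\Phi(x)=ez\,e\Psi(x)e=(eze)\Psi(x)e=e\Psi(x)e=\Phi(x)$, and symmetrically on the right). I then set $\Theta:=L_z\circ R_z\circ\Phi$, i.e. $\Theta(x)=z\,\Phi(x)\,z$; since $z$ is a fixed element of $B^{**}$ and $\Phi$ is continuous, $\Theta:A\to B^{**}$ is automatically continuous. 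It remains to verify that $(\Phi,\Theta)$ is Jordan-triple multiplicative, which is a direct substitution using the reduction identities together with $e\Psi(b)e=\Phi(b)$: one gets $\Phi(aba)=\Phi(a)\Psi(b)\Phi(a)=(\Phi(a)ze)\Psi(b)(ez\Phi(a))=\Phi(a)\,z\Phi(b)z\,\Phi(a)=\Phi(a)\Theta(b)\Phi(a)$, and likewise $\Theta(aba)=z\Phi(aba)z=(z\Phi(a)z)\Phi(b)(z\Phi(a)z)=\Theta(a)\Phi(b)\Theta(a)$. Thus $\Theta$ is the required continuous normalized-pg-inverse.

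For $(1)\Rightarrow(2)$ I would assume that $\Phi$ admits a continuous normalized-pg-inverse $\Theta:A\to B^{**}$ and manufacture a single generalized inverse of $e$ out of the values of $\Theta$ on an approximate unit. Applying Lemma \ref{l non-unital with projections automatic continuity}$(b)$ to the pair $(\Phi,\Theta)$ gives $\Phi(b)=e\,\Theta(b)\,e$ for all $b\in A$. Let $(u_\lambda)$ be an approximate unit of $A$; as recalled in the discussion preceding Lemma \ref{l non-unital with projections automatic continuity}, $u_\lambda\to 1$ in the weak$^*$ topology of $A^{**}$, so the weak$^*$ continuity of $\Phi^{**}$ yields $\Phi(u_\lambda)=\Phi^{**}(u_\lambda)\to\Phi^{**}(1)=e$ weak$^*$. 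On the other hand $(\Theta(u_\lambda))$ is bounded in $B^{**}$, since $\|\Theta(u_\lambda)\|\le\|\Theta\|$, so by Banach--Alaoglu it has a weak$^*$-convergent subnet $\Theta(u_{\lambda_j})\to w\in B^{**}$. The hard part — and the only genuinely delicate point — is passing the identity $\Phi(u_\lambda)=e\,\Theta(u_\lambda)\,e$ to the limit: here I would invoke the separate weak$^*$ continuity of the product of $B^{**}$ (Sakai's theorem), which makes two-sided multiplication by the fixed element $e$ weak$^*$ continuous, so that $e\,\Theta(u_{\lambda_j})\,e\to e\,w\,e$. Comparing the two limits along the subnet gives $ewe=e$, that is, $w$ is a generalized inverse of $e$ and $e$ is regular in $B^{**}$.

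An alternative for $(1)\Rightarrow(2)$ would be to extend $\Theta$ to a weak$^*$-continuous map $\widehat{\Theta}:A^{**}\to B^{**}$ (using that $B^{**}$ is a dual space) and to repeat the bitransposition argument of Lemma \ref{l bitransposed}, obtaining that $\widehat{\Theta}$ is a pg-inverse of $\Phi^{**}$; then setting $a=b=1$ in $\Phi^{**}(aba)=\Phi^{**}(a)\widehat{\Theta}(b)\Phi^{**}(a)$ gives $e=e\,\widehat{\Theta}(1)\,e$ at once. I nonetheless prefer the approximate-unit route above, as it sidesteps constructing the extension and needs only boundedness of $\Theta$ together with weak$^*$-compactness.
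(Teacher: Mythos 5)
Your proof is correct. The direction $(2)\Rightarrow(1)$ is essentially the paper's own argument: the same operator $\Theta=L_z\circ R_z\circ\Phi$ (the paper writes $\Psi'=L_v\circ R_v\circ\Phi$), the same reduction identities $ez\,\Phi(x)=\Phi(x)=\Phi(x)\,ze$ extracted from Lemma \ref{l non-unital with projections automatic continuity}$(b)$, and the same two substitution checks. Where you genuinely diverge is in $(1)\Rightarrow(2)$: the paper simply invokes Lemma \ref{l bitransposed} to conclude that the bitranspose of the normalized-pg-inverse is a normalized-pg-inverse of $\Phi^{**}$ and evaluates at $a=b=1$, whereas you evaluate $\Phi(u_\lambda)=e\,\Theta(u_\lambda)\,e$ along an approximate unit, extract a weak$^*$-cluster point $w$ of the bounded net $(\Theta(u_\lambda))$ by Banach--Alaoglu, and pass to the limit using separate weak$^*$ continuity of the product to obtain $e=ewe$. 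This is exactly the device the paper itself deploys later, in the proof of Theorem \ref{p automatic continuity on K(H)}, so your argument is in the spirit of the paper even though it is not the proof given for this lemma. Your route has a concrete advantage: statement $(1)$ allows the normalized-pg-inverse to take values in $B^{**}$, while the paper's proof of $(1)\Rightarrow(2)$ tacitly assumes $\Psi:A\to B$ (Lemma \ref{l bitransposed} is stated for maps between the C$^*$-algebras themselves, and bitransposing a map with values in $B^{**}$ would land in $B^{****}$). Your observation that Lemma \ref{l non-unital with projections automatic continuity}$(b)$ uses only the continuity of $\Phi$, and is insensitive to the codomain and continuity of the second map, is legitimate and is precisely what makes your argument cover the codomain $B^{**}$ without extra work; your sketched alternative via the canonical weak$^*$-continuous extension $\widehat{\Theta}:A^{**}\to B^{**}$ is the other standard repair. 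The paper's route is shorter when the inverse does map into $B$, but yours proves the statement as literally formulated.
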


\begin{proof}
$(1)\Rightarrow (2)$ Suppose that  $\Phi$ admits a continuous normalized-pg-inverse $\Psi: A\rightarrow B.$ By Lemma \ref{l bitransposed}, the mapping $\Psi^{**}: A^{**}\to B^{**}$ is a normalized-pg-inverse of $\Phi^{**}.$ In particular  $\Phi^{**}(1)=\Phi^{**}(1)\Psi^{**}(1)\Phi^{**}(1).$\smallskip

$(2)\Rightarrow (1)$ Let $v\in B^{**}$ such that $\Phi^{**}(1)=\Phi^{**}(1)v\Phi^{**}(1).$
The mapping $\Psi'=L_v\circ R_v \circ\phi:A\to B^{**}$ is continuous, and by Lemma \ref{l non-unital with projections automatic continuity} $(b),$ we have
 $$\Phi (b) = \Phi^{**} (1) \Psi(b) \Phi^{**}(1),\quad\forall\;b\in A ,$$ and consequently $$\Phi (b) v \Phi^{**} (1) = \Phi^{**} (1) \Psi(b) \Phi^{**}(1) v  \Phi^{**} (1) = \Phi (b),$$ and $$\Phi^{**} (1) v \Phi (b) = \Phi^{**} (1) v \Phi^{**} (1) \Psi(b) \Phi^{**}(1) = \Phi (b) ,\quad\forall\;b\in A$$
Now, for arbitrary $a,\;b\in A,$ we get:
 $$\Phi (aba) = \Phi(a)\Psi(b)\Phi(a)=\Phi(a)v\Phi^{**} (1)\Psi(b)\Phi^{**} (1)v\Phi(a)$$ $$=\Phi(a)v\Phi(b)v\Phi(a)=\Phi(a)\Psi'(b)\Phi(a)$$
 and
 $$\Psi' (aba) = v\Phi(aba)v=v\Phi(a)\Psi(b)\Phi(a)v$$
 $$=v\Phi(a)v\Phi^{**} (1)\Psi(b)\Phi^{**} (1)v\Phi(a)v=\Psi'(a)\Phi(b)\Psi'(a).$$
\end{proof}

We can now extend our study to linear maps between $K(H)$ spaces.

\begin{theorem}\label{p automatic continuity on K(H)} Let $\Phi, \Psi: K(H_1) \to K(H_2)$ be linear maps such that $\Phi$ is continuous and $(\Phi,\Psi)$ is Jordan-triple multiplicative. Then $\Phi$ admits a continuous normalized-pg-inverse.
\end{theorem}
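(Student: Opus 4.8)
The plan is to reduce the statement, via Lemma \ref{l characterization of automatic continuity of a npg-inverse with Phi**(1) regular}, to proving that $w:=\Phi^{**}(1)$ is von Neumann regular in $B^{**}=K(H_2)^{**}=B(H_2)$. Once $w$ is known to be regular, the implication $(2)\Rightarrow(1)$ of that lemma produces a continuous normalized-pg-inverse of $\Phi$ and we are done, so the entire problem becomes to exhibit some $v\in B(H_2)$ with $wvw=w$. First I would fix an orthonormal basis of $H_1$ with associated matrix units $\{e_{ij}\}$ in $K(H_1)$, write $e_i:=e_{ii}$ for the minimal projections, and let $p_F:=\sum_{i\in F}e_i$ run over the increasing net of finite-rank projections indexed by finite subsets $F$. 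Since $(p_F)$ is an approximate unit of $K(H_1)$, it converges weak$^*$ to $1$ in $A^{**}$, and the weak$^*$-continuity of $\Phi^{**}$ gives $w=\text{w}^*\text{-}\lim_F\sum_{i\in F}\Phi(e_i)$.

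Next I would extract the orthogonality provided by Lemma \ref{l non-unital with projections automatic continuity}: applying parts $(c)$, $(d)$ and $(e)$ to the orthogonal projections $e_i,e_j$ $(i\neq j)$ yields $\Phi(e_i)\Psi(e_j)=T(e_i)T(e_j)=0$ and $\Psi(e_i)\Phi(e_j)=S(e_i)S(e_j)=0$. Consequently the idempotents $r_i:=\Phi(e_i)\Psi(e_i)$ and $l_i:=\Psi(e_i)\Phi(e_i)$ form two mutually orthogonal families ($r_ir_j=l_il_j=0$ for $i\neq j$), and one checks $\Phi(e_i)=r_i\Phi(e_i)l_i$, $\Psi(e_i)=l_i\Psi(e_i)r_i$, so that $\Phi(e_i)$ is an invertible operator from $\operatorname{ran}(l_i)$ onto $\operatorname{ran}(r_i)$ with inverse $\Psi(e_i)$. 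The candidate generalized inverse is then the formal sum $v=\sum_i\Psi(e_i)$: using the two orthogonality relations, only the diagonal terms survive in $wvw=\sum_{j,k,l}\Phi(e_j)\Psi(e_k)\Phi(e_l)$, giving $wvw=\sum_j\Phi(e_j)\Psi(e_j)\Phi(e_j)=\sum_j\Phi(e_j)=w$. Hence $v$ is a generalized inverse of $w$ \emph{as soon as it defines a bounded operator}, and everything hinges on the uniform bound
\[
\sup_i\|\Psi(e_i)\|<\infty .
\]

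This is the main obstacle, and it is the exact analogue of the boundedness step in the proof of Proposition \ref{p automatic continuity on c0}. I would argue by contradiction, mirroring that proof: if $\|\Psi(e_{i_n})\|\to\infty$ along a subsequence, put $\lambda_n:=\|\Psi(e_{i_n})\|^{-1}\to0$ and $a:=\sum_n\lambda_n e_{i_n}$, which lies in $K(H_1)$ because $\lambda_n\to0$. Since $e_{i_n}ae_{i_n}=\lambda_n e_{i_n}$, the Jordan-triple identity gives $\Phi(e_{i_n})\Psi(a)\Phi(e_{i_n})=\Phi(e_{i_n}ae_{i_n})=\lambda_n\Phi(e_{i_n})$, and multiplying on both sides by $\Psi(e_{i_n})$ produces
\[
l_{i_n}\Psi(a)r_{i_n}=\lambda_n\Psi(e_{i_n}),
\]
an element of norm exactly $1$. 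But $\Psi$ takes values in the \emph{compact} operators $K(H_2)$, so $\Psi(a)$ is compact, and the norms of the ``blocks'' of $\Psi(a)$ cut out by the mutually orthogonal families $(r_{i_n})$ and $(l_{i_n})$ must tend to $0$, contradicting the value $1$.

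The delicate point in this last step is that $r_i$ and $l_i$ are \emph{oblique} (non-self-adjoint) idempotents rather than orthogonal projections, so the decay of the blocks $l_{i_n}\Psi(a)r_{i_n}$ of a compact operator is not automatic from orthogonality alone (mutually orthogonal idempotents may have non-orthogonal ranges). Here I would exploit the extra uniform control $\|\sum_{i\in F}\Phi(e_i)\|=\|\Phi(p_F)\|\le\|\Phi\|$ on the partial sums, which limits how oblique the families can be collectively, together with the regularity of each $\Phi(e_i)$ (allowing, after a further refinement of the subsequence if needed, a passage to genuine range projections) to force $\|l_{i_n}\Psi(a)r_{i_n}\|\to0$ and close the contradiction. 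Granting the bound $\sup_i\|\Psi(e_i)\|=M<\infty$, the orthogonality of $(r_i)$ and $(l_i)$ makes the partial sums of $v$ uniformly bounded by $M$, so $v$ defines an element of $B(H_2)$ with $wvw=w$; thus $w=\Phi^{**}(1)$ is regular and Lemma \ref{l characterization of automatic continuity of a npg-inverse with Phi**(1) regular} furnishes the desired continuous normalized-pg-inverse of $\Phi$.
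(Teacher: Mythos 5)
Your overall skeleton is the same as the paper's: reduce, via Lemma \ref{l characterization of automatic continuity of a npg-inverse with Phi**(1) regular}, to showing that $\Phi^{**}(1)$ is regular in $B(H_2)$, and produce a generalized inverse as a weak$^*$ limit built from the values of $\Psi$ on an approximate unit of projections. The orthogonality relations you extract from Lemma \ref{l non-unital with projections automatic continuity}$(c)$--$(e)$ are also correct. However, both boundedness steps on which your construction rests have genuine gaps. First, the bound $\sup_i\|\Psi(e_i)\|<\infty$ is never actually established: your contradiction hinges on the claim that the blocks $l_{i_n}\Psi(a)r_{i_n}$ of the compact operator $\Psi(a)$ tend to zero, and, as you yourself concede, this does not follow from $l_il_j=r_ir_j=0$ alone, since these idempotents are oblique, may have unbounded norms, and need not have orthogonal ranges; the sentence promising to ``exploit'' the bound on $\|\Phi(p_F)\|$ to ``force'' the decay is a declaration of intent, not an argument. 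The paper's own argument here needs no compactness at all: choosing mutually orthogonal projections with $\|\Psi(p_n)\|\geq n^3$ and setting $x_0=\sum_n \frac{1}{n}p_n\in K(H_1)$, the identity $\frac{1}{n^2}\Psi(p_n)=\Psi(x_0p_nx_0)=\Psi(x_0)\Phi(p_n)\Psi(x_0)$ yields $n\leq \|\Psi(x_0)\|^2\|\Phi\|$ for all $n$, a contradiction using only the triple identity and the continuity of $\Phi$.

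The second gap is the fatal one. Even granting $M:=\sup_i\|\Psi(e_i)\|<\infty$, your assertion that ``the orthogonality of $(r_i)$ and $(l_i)$ makes the partial sums of $v$ uniformly bounded by $M$'' is false in general: the partial sums are exactly $\sum_{i\in F}\Psi(e_i)=\Psi(p_F)$, and mutually annihilating \emph{oblique} idempotents admit no max-formula for norms of sums (already for biorthogonal rank-one idempotents $r_i$ with $r_ir_j=0$ one can have $\|\sum_{i\in F}r_i\|\to\infty$). That max-formula, equation \eqref{boundedness of Psi qn}, is what makes the commutative $c_0$ proof work, and its failure is precisely the extra difficulty in $K(H)$. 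The paper bridges it with a second, separate contradiction argument: if $\{\Psi(q_F):F\in\mathcal{F}(\Lambda)\}$ were unbounded, then (since deleting a fixed finite set $F$ changes $\|\Psi(q_H)\|$ by at most $(\sharp F)R$) one finds pairwise disjoint finite sets $F_n$ with $\|\Psi(q_{F_n})\|>n^3$, and the harmonic trick applied to $y_0=\sum_n\frac{1}{n}q_{F_n}$ gives the contradiction. Without this step your operator $v$ need not exist. A lesser but real issue: your verification $wvw=w$ multiplies three infinite weak$^*$ sums termwise, which is not justified since multiplication is only separately weak$^*$ continuous; the paper avoids this by taking a weak$^*$ cluster point $w'$ of the bounded net $(\Psi(q_F))$ and passing to the limit in the single identity $\Phi(q_F)=\Phi^{**}(1)\Psi(q_F)\Phi^{**}(1)$ from Lemma \ref{l non-unital with projections automatic continuity}$(b)$, where fixed elements multiply the net and separate weak$^*$ continuity suffices.
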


\begin{proof} We may assume that $H_1$ is infinite dimensional.\smallskip

We shall first prove that for every infinite family $\{p_j :j \in \Lambda\}$ of mutually orthogonal projections in $K(H_1)$ the set \begin{equation}\label{eq boundedness on mo projections} \{\Psi (p_j) : j \in \Lambda\} \hbox{ is bounded.}
\end{equation} Arguing by contradiction, we assume that the above set is unbounded. Then we can find a countable subset $\Lambda_0$ in $\Lambda$ such that $\|\Psi (p_n)\| \geq n^3,$ for every natural $n$.  Since the projections in the sequence $(p_n)$ are mutually orthogonal, the element $\displaystyle x_0 = \sum_{k=1}^{\infty} \frac1n p_n\in K(H_1),$ and by hypothesis,
$$\Psi(x_0) \Phi(p_n) \Psi(x_0) = \Psi(x_0 p_n x_0) = \frac{1}{n^2} \Psi(p_n),$$ and hence $$n= \frac{1}{n^2} n^3 < \left\| \frac{1}{n^2} \Psi(p_n) \right\|\leq \left\| \Psi(x_0) \right\|^2 \left\|\Phi(p_n) \right\| \leq \left\| \Psi(x_0) \right\|^2 \left\|\Phi\right\|,$$ for every natural $n$, which is impossible.\smallskip

Now, let $\{p_j :j \in \Lambda\}$ be a maximal set of mutually orthogonal (minimal) projections in $K(H_1)$. By \eqref{eq boundedness on mo projections} there exists a positive $R$ such that $\|\Psi (p_j)\|\leq R$, for every $j\in \Lambda$. Let $\mathcal{F}(\Lambda)$ denote the collection of all finite subsets of $\Lambda$, ordered by inclusion. For each $F\in \mathcal{F}(\Lambda)$ we set $\displaystyle q_{_F} :=\sum_{j\in F} p_j\in K(H_1).$ It is known that $(q_{_F})_{_{F\in \mathcal{F}(\Lambda)}})$ is an approximate unit in $K(H_1)$. Clearly for each $F\in \mathcal{F}(\Lambda)$ we have $\|\Psi (q_{_F})\|\leq (\sharp F) \; R.$\smallskip

We shall now prove that
\begin{equation}\label{eq boundedness on the approx unit} \{\Psi (q_{_F}) : F\in \mathcal{F}(\Lambda) \} \hbox{ is bounded.}
\end{equation} Suppose, contrary to our goal, that the above set is unbounded.\smallskip

Now, we shall establish the following property: for each $F\in \mathcal{F}(\Lambda)$, and each positive $\delta$ there exists $G\in \mathcal{F}(\Lambda)$ with $G\cap F = \emptyset$ and $\| \Psi (q_{_G}) \| > \delta.$ Indeed, if that is not the case, there would exist $F \in \mathcal{F}(\Lambda)$, $\delta>0$ such that $\| \Psi (q_{_G}) \| \leq  \delta, $  for every $G\in \mathcal{F}(\Lambda)$ with $G\cap F = \emptyset$. In such a case, for each $H\in \mathcal{F}(\Lambda)$ we have $$\|\Psi (q_{_H}) \| \leq  \|\Psi (q_{_{(H\cap F)}}) \| + \|\Psi (q_{_{(H\cap F^{c})}}) \| \leq (\sharp F) \; R + \delta,$$ which contradicts the unboundedness of the set $\{\Psi (q_{_F}) : F\in \mathcal{F}(\Lambda) \}$.\smallskip

Applying the above property, we find a sequence $(F_n)\subset \mathcal{F}(\Lambda)$ with $F_n \cap F_m =\emptyset $ for every $n\neq m$ and $\|\Psi (q_{_{F_n}})\| > n^3,$ for every natural $n$. We take $\displaystyle y_0 :=\sum_{n=1}^{\infty} \frac1n q_{_{F_n}}\in K(H_1)$.
By hypothesis, $\Psi(y_0) \Phi(q_{_{F_n}}) \Psi (y_0) = \Psi (y_0 q_{_{F_n}} y_0) = \frac{1}{n^2} \Psi (q_{_{F_n}}),$ and hence
$$n=\frac{1}{n^2} n^3 < \|\Psi(y_0) \Phi(q_{_{F_n}}) \Psi (y_0)  \|\leq \|\Psi(y_0)\|^2 \|\Phi\|,$$ for every natural $n$, leading to the desired contradiction. This concludes the proof of \eqref{eq boundedness on the approx unit}.\smallskip

Now, by \eqref{eq boundedness on the approx unit} the net $(\Psi (q_{_F}))_{_{ F\in \mathcal{F}(\Lambda)}}$ is bounded in $K(H_2)\subseteq B(H_2),$ and by the weak$^*$-compactness of the closed unit ball of the latter space, we can find a subnet $(\Psi (q_{j}))_{j\in \Lambda'}$ converging to some $w\in B(H_2)$ in the weak$^*$ topology of this space. We observe that $(q_{_F})_{_{ F\in \mathcal{F}(\Lambda)}}\to 1$ in the weak$^*$ topology of $B(H_1)$, and by the weak$^*$ continuity of $\Phi^{**}$ we also have $(\Phi (q_{j}))_{j\in \Lambda'}\to \Phi^{**} (1)$ in the weak$^*$ topology of $B(H_2)$. Lemma \ref{l non-unital with projections automatic continuity} implies that $$  \Phi (q_{j}) = \Phi^{**} (1) \Psi (q_{j}) \Phi^{**} (1)$$ for every $j\in \Lambda'$. Taking weak$^*$ limits in the above equality we get $$\Phi^{**} (1) = \Phi^{**} (1) w \Phi^{**} (1),$$ and hence $\Phi^{**} (1)$ is regular in $B(H_2)$.\smallskip

Finally, an application of Lemma \ref{l characterization of automatic continuity of a npg-inverse with Phi**(1) regular} gives the desired statement.
\end{proof}

We can now obtain an improved version of Corollary \ref{c prop2 continuity instead unital} for linear maps between $K(H)$ spaces.

\begin{corollary}\label{c prop2 continuity instead unital improved}
Let $\Phi,\Upsilon:K(H_1)\rightarrow K(H_2)$ be linear maps such that $\Phi$ is continuous and $(\Phi,\Upsilon)$ is Jordan-triple multiplicative. Then the following statements hold:
\begin{enumerate}[$(a)$]
    \item There exists a continuous Jordan homomorphism  $T:K(H_1)\rightarrow B(H_2)$ such that $\Phi (a) = T (a) \Phi^{**} (1)$, for every $a\in K(H_1)$, and $\Phi^{**}(1) B(H_2) =T(1) B(H_2);$
    \item There exists a continuous Jordan homomorphism $S:K(H_1)\rightarrow B(H_2)$ such that $\Phi (a)=\Phi^{**} (1) S (a),$ for every $a\in K(H_1)$, and $B(H_2) \Phi^{**}(1)=B(H_2) S(1).$
\end{enumerate}
\end{corollary}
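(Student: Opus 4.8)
The plan is to reduce the whole statement to the unital structural result Proposition \ref{prop2}, applied to the weak$^*$-continuous bitranspose $\Phi^{**}\colon B(H_1)\to B(H_2)$ (recall $K(H_i)^{**}=B(H_i)$), whose domain $B(H_1)$ is unital. Indeed, once I know that $\Phi^{**}$ admits a norm-continuous normalized-pg-inverse, Proposition \ref{prop2} produces norm-continuous Jordan homomorphisms $T,S\colon B(H_1)\to B(H_2)$ with $\Phi^{**}(a)=T(a)\Phi^{**}(1)=\Phi^{**}(1)S(a)$ for all $a$, together with $\Phi^{**}(1)B(H_2)=T(1)B(H_2)$ and $B(H_2)\Phi^{**}(1)=B(H_2)S(1)$. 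Restricting $T$ and $S$ to $K(H_1)\subseteq B(H_1)$, which keeps them Jordan homomorphisms, and using $\Phi^{**}|_{K(H_1)}=\Phi$, yields exactly statements $(a)$ and $(b)$; the symbols $T(1)$ and $S(1)$ are then read as the values at $1\in B(H_1)$ of these extensions.

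Thus the only real task is to exhibit a norm-continuous normalized-pg-inverse of $\Phi^{**}$. Theorem \ref{p automatic continuity on K(H)} applied to the given pair $(\Phi,\Upsilon)$, together with Lemma \ref{l characterization of automatic continuity of a npg-inverse with Phi**(1) regular}, guarantees that $\Phi^{**}(1)$ is a regular element of $B(H_2)$; I fix a generalized inverse $v\in B(H_2)$ of $\Phi^{**}(1)$, so that, by the construction in that lemma's proof, $\Psi'=L_v\circ R_v\circ\Phi\colon K(H_1)\to B(H_2)$ is a continuous normalized-pg-inverse of $\Phi$. I then set $\Xi=L_v\circ R_v\circ\Phi^{**}\colon B(H_1)\to B(H_2)$, which is weak$^*$-continuous — being $\Phi^{**}$ followed by fixed left and right multiplications — and agrees with $\Psi'$ on $K(H_1)$.

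The crux, and the main obstacle, is to verify that $\Xi$ is a pg-inverse of $\Phi^{**}$ on all of $B(H_1)$. The defining identity $\Phi^{**}(aba)=\Phi^{**}(a)\Xi(b)\Phi^{**}(a)$ is quadratic in $a$, so it cannot be transferred off $K(H_1)$ by a single weak$^*$-limit: multiplication in $B(H_2)$ is only separately weak$^*$-continuous. I would circumvent this exactly as in Lemma \ref{l bitransposed}, passing to the equivalent trilinear form $\Phi^{**}(abc+cba)=\Phi^{**}(a)\Xi(b)\Phi^{**}(c)+\Phi^{**}(c)\Xi(b)\Phi^{**}(a)$, which holds for $a,b,c\in K(H_1)$ because $\Psi'$ is a pg-inverse of $\Phi$. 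Since each of $a,b,c$ now appears once on each side, I approximate $a,b,c\in B(H_1)$ by bounded nets from $K(H_1)$ via Goldstine's theorem and take weak$^*$-limits one variable at a time, using the separate weak$^*$-continuity of the product (Sakai) and the weak$^*$-continuity of $\Phi^{**}$ and $\Xi$; this yields the trilinear identity, hence the quadratic one, on all of $B(H_1)$. With $\Xi$ now a pg-inverse and $B(H_1)$ unital, Proposition \ref{prop1} shows that $\Theta=L_v\circ R_v\circ\Phi^{**}=\Xi$ is already a normalized-pg-inverse, and it is norm-continuous because $\Phi^{**}$ is. Feeding $\Theta$ into Proposition \ref{prop2} then gives $T=L_{\Phi^{**}(1)}\circ\Theta$ and its counterpart $S=R_{\Phi^{**}(1)}\circ\Theta$, both norm-continuous from these explicit formulas; the accompanying range identities and the idempotency of $T(1)=\Phi^{**}(1)\Theta(1)$ are the routine bookkeeping already performed in Proposition \ref{prop2}, and restriction to $K(H_1)$ finishes the argument.
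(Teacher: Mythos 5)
Your proof is correct, but it takes a genuinely different route from the paper's. The paper also starts from Theorem \ref{p automatic continuity on K(H)}, obtaining a continuous normalized-pg-inverse $\Psi: K(H_1)\to B(H_2)$, but then it stays at the level of $K(H_1)$: it defines $T(a)=\Phi^{**}(1)\Psi(a)$ and $S(a)=\Psi(a)\Phi^{**}(1)$ as in Lemma \ref{l non-unital with projections automatic continuity}, and proves the Jordan identity $T(a^2)=T(a)^2$ directly, by computing $\Psi(a)\Phi^{**}(1)\Psi(a)=\hbox{weak}^*\hbox{-}\lim_{\lambda}\Psi(a u_\lambda a)=\Psi(a^2)$ along an approximate unit $(u_\lambda)$ of $K(H_1)$, using the separate weak$^*$ continuity of the product of $B(H_2)$. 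You instead lift the entire pg-inverse relation to the bidual: you check (via the linearized trilinear identity, Goldstine, and one-variable-at-a-time weak$^*$ limits, exactly as in Lemma \ref{l bitransposed}) that $\Xi=L_v\circ R_v\circ\Phi^{**}$ is a pg-inverse of $\Phi^{**}$ on the unital algebra $B(H_1)$, and then quote Propositions \ref{prop1} and \ref{prop2} wholesale before restricting back to $K(H_1)$. Your limit argument is sound: each variable occurs exactly once in every term of the trilinear identity, and $\Xi$ is weak$^*$ continuous because $\Phi^{**}$, $L_v$ and $R_v$ are, so the passage from $K(H_1)$ to $B(H_1)$ is legitimate. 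What your approach buys: it is more modular (no Jordan-homomorphism computation has to be redone), and --- a genuine advantage --- it gives an honest meaning to $T(1)$ and $S(1)$ as values of maps defined on all of $B(H_1)$, thereby actually establishing the range equalities $\Phi^{**}(1)B(H_2)=T(1)B(H_2)$ and $B(H_2)\Phi^{**}(1)=B(H_2)S(1)$, which the paper's own proof leaves implicit (its $T$ and $S$ are defined only on $K(H_1)$, where $1$ does not live). What the paper's approach buys: it never needs the full pg-inverse identity on $B(H_1)$ --- only the weaker bidual identity of Lemma \ref{l non-unital with projections automatic continuity}$(a)$, in which the middle variable stays in $K(H_1)$ --- and the resulting computation is shorter.
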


\begin{proof} By Theorem \ref{p automatic continuity on K(H)} $\Phi$ admits a continuous normalized-pg-inverse $\Psi : K(H_1)\to B(H_2)$. Applying Lemma \ref{l non-unital with projections automatic continuity} we deduce that the mappings $T, S :  K(H_1)\to B(H_2)$, $T(a) = \Phi^{**} (1) \Psi (a)$ and $S(a) = \Psi(a) \Phi^{**} (1)$ ($a\in K(H_1)$), are linear and continuous and the identities $$ T(a) T(b) = \Phi (a) \Psi (b) , \ \Phi (a) = T(a) \Phi^{**} (1),$$ and $$ S(a) S(b) = \Psi (a) \Phi (b) , \ \Phi (a) = \Phi^{**} (1)  T(a),$$ hold for every $a,b\in K(H_1)$.\smallskip

Let $(u_{\lambda})$ be an approximate unit in $K(H_1)$. Applying the separate weak$^*$ continuity of the product of $B(H_2)$ we have $$\Psi (a) \Phi^{**} (1) \Psi (a) = \hbox{weak$^*$-}\lim_{\lambda} \Psi(a) \Phi(u_\lambda) \Psi (a) $$ $$= \hbox{weak$^*$-}\lim_{\lambda}   \Psi(a u_\lambda a) = \Psi^{**} (a^2) = \Psi (a^2),$$ for all $a\in K(H_1)$. Finally, by Lemma \ref{l non-unital with projections automatic continuity} we get $$T(a)^2 = \Phi^{**} (1) \Psi (a) \Phi^{**} (1) \Psi (a) = \Phi^{**} (1) \Psi (a^2) = T(a^2),$$ for all $a$ in $K(H_1)$. The statement for $S$ follows by similar arguments.
\end{proof}

Let $\Phi: K(H_1) \to K(H_2)$ be a bounded linear map. We do not know if any normalized-pg-inverse of $\Phi$ is automatically continuous.

\section{Pointwise-generalized-inverses of linear maps between JB$^*$-triples}

In this section we explore a version of pointwise-generalized inverse in the setting of JB$^*$-triples.

\begin{definition}\label{def pg-inverse triples} Let $\Phi:E\to F$ be a linear mapping between JB$^*$-triples. We shall say that $T$ admits a pointwise-generalized-inverse (pg-inverse) if there exists a linear mapping $\Psi: E\to F$ satisfying $$\Phi\{a,b,c\} = \{\Phi(a), \Psi(b), \Phi(c)\},$$ for every $a,b,c\in E$. If $\Phi$ also is a pg-inverse of $\Psi$ we shall say that $\Psi$ is a normalized-pg-inverse of $\Phi$ or that $(\Phi,\Psi)$ is JB$^*$-triple multiplicative.
\end{definition}

Let $\Phi,\Psi: A\to B$ be linear maps between C$^*$-algebras. The pair $(\Phi,\Psi)$ is Jordan-triple multiplicative if $\Phi (aba ) = \Phi(a)\Psi(b)\Phi(a)$ and $\Psi (aba ) = \Psi(a)\Phi(b)\Psi(a)$. C$^*$-algebras can be regarded as JB$^*$-triples and in such a case, the couple $(\Phi,\Psi)$ is JB$^*$-triple multiplicative if $\Phi (a b^* a ) = \Phi(a)\Psi(b)^*\Phi(a)$ and $\Psi (ab^*a ) = \Psi(a)\Phi(b)^*\Psi(a)$. We should remark, that these two notions are, in principle, independent.\smallskip

Every triple homomorphism between JB$^*$-triples is a normalized-pg-inverse of itself. The next lemma gathers some basic properties of linear maps between JB$^*$-triples admitting a pg-inverse.

\begin{lemma}\label{l basic pg invertible triples}  Let $\Phi:E\rightarrow F$ be a linear map between JB$^*$-triples admitting a
pg-inverse $\Psi$. Then the following statements hold:
\begin{enumerate}[$(a)$]
\item $\Phi$ maps von Neumann regular elements in $E$ to von Neumann regular elements in $F$, that is, $\Phi$ is a weak regular preserver, More concretely, if $b$ is a generalized inverse of $a$ then $\Psi(b)$ is a generalized inverse of $\Phi(a)$;
\item Let $\Phi_1 : A\to E$ and $\Phi_2: F\to B$ be linear maps between JB$^*$-triples admitting a pg-inverse, then $\Phi_2 \Phi$ and $\Phi\Phi_1$ admit a pg-inverse too;
\item If $\Phi$ and $\Psi$ are continuous then $\Psi^{**}: E^{**} \to F^{**}$ is a pg-inverse of $\Phi^{**}$.
\end{enumerate}
\end{lemma}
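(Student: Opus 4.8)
The plan is to prove each of the three statements by mirroring the arguments already developed for the associative (C$^*$-algebra) setting, working directly with the triple product $\{a,b,c\}$ rather than with $aba$. For part $(a)$, I would start from the von Neumann regularity characterization recalled in the preliminaries: an element $a\in E$ is von Neumann regular precisely when it admits a (unique) $b=a^{\wedge}$ with $Q(a)(b)=a$, $Q(b)(a)=b$, and $[Q(a),Q(b)]=0$, where $Q(x)(y)=\{x,y,x\}$. The key observation is that the hypothesis $\Phi\{a,b,c\}=\{\Phi(a),\Psi(b),\Phi(c)\}$ specializes, upon setting $a=c$, to $\Phi(Q(a)(b))=Q(\Phi(a))(\Psi(b))$. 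So if $b$ is a generalized inverse of $a$ (meaning $Q(a)(b)=a$), then applying $\Phi$ gives $\Phi(a)=\Phi(Q(a)(b))=Q(\Phi(a))(\Psi(b))=\{\Phi(a),\Psi(b),\Phi(a)\}$, which says exactly that $\Psi(b)$ is a generalized inverse of $\Phi(a)$, and in particular $\Phi(a)\in Q(\Phi(a))(F)$, so $\Phi(a)$ is von Neumann regular. This is the direct analogue of Lemma \ref{l basic pg inv}$(a)$.

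For part $(b)$, I would imitate the composition argument in Lemma \ref{l basic pg inv}$(e)$. Suppose $\Phi_1:A\to E$ admits a pg-inverse $\Psi_1$. Then for all $a,b,c\in A$,
\begin{align*}
\Phi\Phi_1\{a,b,c\} &= \Phi\{\Phi_1(a),\Psi_1(b),\Phi_1(c)\}\\
&= \{\Phi\Phi_1(a),\Psi\Psi_1(b),\Phi\Phi_1(c)\},
\end{align*}
so $\Psi\Psi_1$ is a pg-inverse of $\Phi\Phi_1$. The argument for $\Phi_2\Phi$ is symmetric, composing on the outside instead: if $\Psi_2$ is a pg-inverse of $\Phi_2$, then $\Psi_2\Psi$ serves as a pg-inverse of $\Phi_2\Phi$, using linearity of $\Phi_2$ and the triple-product identity for $\Phi_2$. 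This is a purely formal verification once the defining identity is written out.

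For part $(c)$, the strategy follows Lemma \ref{l bitransposed} verbatim, replacing the associative product with the triple product. The maps $\Phi^{**},\Psi^{**}:E^{**}\to F^{**}$ are weak$^*$-to-weak$^*$ continuous between JBW$^*$-triples, and the triple product of a JBW$^*$-triple is separately weak$^*$ continuous by the result of Barton--Timoney cited in the preliminaries. Fixing $a,b,c\in E^{**}$, Goldstine's theorem supplies bounded nets $(a_\lambda),(b_\mu),(c_\delta)$ in $E$ weak$^*$-converging to $a,b,c$; applying the hypothesis $\Phi\{a_\lambda,b_\mu,c_\delta\}=\{\Phi(a_\lambda),\Psi(b_\mu),\Phi(c_\delta)\}$ and passing to weak$^*$ limits successively in each index yields $\Phi^{**}\{a,b,c\}=\{\Phi^{**}(a),\Psi^{**}(b),\Phi^{**}(c)\}$. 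The main technical point to be careful about is that the triple product is only \emph{separately} weak$^*$ continuous, so the three limits must be taken one variable at a time, holding the others fixed; this is exactly the maneuver used in Lemma \ref{l bitransposed}, and it is the only place where genuine care is required. I do not anticipate a serious obstacle here, since all the necessary continuity facts are already recorded in the preliminaries.
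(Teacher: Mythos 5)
Your proposal is correct and follows essentially the same route as the paper's own proof: part $(a)$ applies the defining identity to $Q(a)(b)=a$, part $(b)$ is the formal composition of the two pg-inverse identities, and part $(c)$ repeats the Goldstine/separate-weak$^*$-continuity argument of Lemma \ref{l bitransposed} using Barton--Timoney. If anything, your write-up of $(b)$ is cleaner than the paper's, which verifies the case $\Phi_2\Phi$ while labelling the outer map $\Phi_1$ (a notational slip), whereas you keep the domains and composition orders consistent throughout.
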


\begin{proof}$(a)$ If $a$ is von Neumann regular the there exists $b\in E$ such that $Q(a) (b)= \{a,b,a\} = a$. By hypothesis, $\Phi(a) = \Phi\{a,b,a\} =\{\Phi(a), \Psi(b), \Phi(a)\}$, which shows that $\Phi(a)$ is von Neumann regular.\smallskip

$(b)$ Under these hypothesis, let $\Psi_1$ be a pg-inverse of $\Phi_1$. Then $$\Phi_1 \Phi \{a,b,a\} = \Phi_1 \{\Phi(a), \Psi(b), \Phi(a)\}= \{\Phi_1 \Phi(a), \Psi_1 \Psi(b), \Phi_1\Phi(a)\},$$ which shows that $\Psi_1 \Psi$ is a pg-inverse of $\Phi_1\Phi$. The rest of the statement follows from similar arguments.\smallskip

$(c)$ Assuming that $\Phi$ and $\Psi$ are continuous, the maps $\Phi^{**}$, $\Psi^{**}$ are weak$^*$-continuous. The bidual $E^{**}$ is a JBW$^*$-triple, and hence its triple product is separately weak$^*$ (see \cite{BarTi}). Then we can repeat the arguments in the proof of Lemma \ref{l bitransposed} to conclude, via Goldstine's theorem, that $$\Phi^{**}\{a,b,c\} = \{\Phi^{**}(a), \Psi^{**}(b), \Phi^{**}(c)\},$$ for every $a,b,c\in E^{**}$.
\end{proof}

Let us observe that the arguments in the proof of Theorem \ref{thm2} are obtained with geometric tools which are not merely restricted to the setting of C$^*$-algebras. Our next result is a generalization of the just commented theorem, to clarify the parallelism, we recall that, by Kadison's theorem (\cite[Proposition 1.6.1 and Theorem 1.6.4]{S}), a C$^*$-algebra $A$ is unital if and only if its closed unit ball contains extreme points.

\begin{theorem}\label{thm2 triples} Let $\Phi, \Psi:E\rightarrow F$ be linear maps between JB$^*$-triples. Suppose that $(\Phi,\Psi)$ is JB$^*$-triple multiplicative. Then the following are equivalent:
\begin{enumerate}[$(a)$]
\item $\Phi$ and $\Psi$ are contractive;
\item  $\Psi=\Phi$ is a triple homomorphism.
\end{enumerate}
If the closed unit ball of $E$ contains extreme points, then the above statements are also equivalent to the following:
\begin{enumerate}[$(c)$]\item $\Phi$ \emph{strongly preserves regularity}, that is, $\Phi(x^{\wedge}) = \Phi(x)^{\wedge}$ for every $x\in E^{\wedge}$.
\end{enumerate}
\end{theorem}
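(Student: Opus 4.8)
The plan is to mirror the structure of the proof of Theorem~\ref{thm2}, replacing the algebraic tools used there with the purely triple-theoretic machinery available for JB$^*$-triples. I would first pass to biduals: by Lemma~\ref{l basic pg invertible triples}$(c)$, if $\Phi,\Psi$ are contractive then $\Phi^{**},\Psi^{**}:E^{**}\to F^{**}$ are contractive weak$^*$-continuous maps between JBW$^*$-triples with $\Psi^{**}$ a normalized-pg-inverse of $\Phi^{**}$. The aim of the implication $(a)\Rightarrow(b)$ is to evaluate these maps on tripotents. Fixing a tripotent $e\in E^{**}$, the JB$^*$-triple multiplicativity gives
\begin{equation*}
\Phi^{**}(e)=\{\Phi^{**}(e),\Psi^{**}(e),\Phi^{**}(e)\},\qquad \Psi^{**}(e)=\{\Psi^{**}(e),\Phi^{**}(e),\Psi^{**}(e)\},
\end{equation*}
so $\Psi^{**}(e)$ is a triple-generalized-inverse of $\Phi^{**}(e)$ in the von Neumann regular sense recorded in the preliminaries. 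Since both images lie in the closed unit ball and admit norm-one generalized inverses, I would invoke the characterization of von Neumann regularity together with the norm constraint (the triple analogue of \cite[Corollary~3.6]{BurKaMoPeRa08}) to force $\Phi^{**}(e)$ and $\Psi^{**}(e)$ to be tripotents, and then conclude $\Psi^{**}(e)=\Phi^{**}(e)$ from the uniqueness of $a^{\wedge}$ for a tripotent (a tripotent is its own generalized inverse).

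Having established $\Psi^{**}=\Phi^{**}$ on all tripotents of $E^{**}$, I would extend this to all of $E^{**}$ by the density/spectral argument: in a JBW$^*$-triple every element lies in the norm-closed span of the tripotents in the weakly-closed inner ideal it generates (each element of the Peirce-$2$ JBW$^*$-algebra is a norm limit of finite real-linear combinations of mutually orthogonal tripotents, via its spectral resolution). Linearity and continuity of $\Phi^{**},\Psi^{**}$ then give $\Psi^{**}=\Phi^{**}$ on $E^{**}$, hence $\Psi=\Phi$ on $E$. That $\Phi=\Psi$ is then a triple homomorphism follows exactly as in the computation in $(b)\Rightarrow(c)$ of Theorem~\ref{thm2}: from $\Phi\{a,b,c\}=\{\Phi(a),\Psi(b),\Phi(c)\}=\{\Phi(a),\Phi(b),\Phi(c)\}$. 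The reverse implication $(b)\Rightarrow(a)$ is immediate, since triple homomorphisms between JB$^*$-triples are automatically contractive by \cite[Lemma~1$(a)$]{BarDanHor88} (or \cite[Proposition~3.4]{Harris81}).

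For the final equivalence with $(c)$, under the extra hypothesis that the closed unit ball of $E$ contains extreme points — equivalently, by the triple version of Kadison's theorem, that $E$ contains a complete/unitary tripotent, making $E$ behave like a unital object — I would appeal to the JB$^*$-triple strong-preserver results of \cite{BurMarMoPe2016} in the same way Corollary~\ref{c Jordan homomorphisms} invokes \cite[Theorem~3.5]{BurMarMor12}: a linear map strongly preserving regularity on such a JB$^*$-triple is precisely a triple homomorphism, giving $(c)\Leftrightarrow(b)$. The main obstacle I anticipate is the tripotent-extension step: verifying that equality on tripotents propagates to the whole JBW$^*$-triple requires the correct spectral-resolution statement (approximation of general elements by finite real-linear combinations of mutually orthogonal tripotents lying in a common Peirce-$2$ JBW$^*$-algebra), and one must be careful that, unlike the von Neumann algebra case where self-adjoint elements suffice, here one needs the full JBW$^*$-triple structure theory to guarantee enough tripotents. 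A secondary delicate point is locating the precise reference for the norm-one-regular-implies-tripotent fact in the triple setting, ensuring the constant and the boundedness hypotheses match those in \cite{BurKaMoPeRa08}.
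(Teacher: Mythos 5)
Your proof follows the same route as the paper's own: biduals via Lemma \ref{l basic pg invertible triples}$(c)$, tripotency of $\Phi^{**}(e)$ and $\Psi^{**}(e)$ via \cite[Corollary 3.6]{BurKaMoPeRa08}, norm-approximation of arbitrary elements of a JBW$^*$-triple by finite linear combinations of mutually orthogonal tripotents (this is \cite[Lemma 3.11]{Horn87}, so the ``extension obstacle'' you worried about is a standard citation), then \cite[Lemma 1$(a)$]{BarDanHor88} for $(b)\Rightarrow(a)$ and \cite[Theorem 3.2]{BurMarMoPe2016} for the equivalence with $(c)$. There is, however, a genuine gap at the crucial step where you conclude $\Psi^{**}(e)=\Phi^{**}(e)$ ``from the uniqueness of $a^{\wedge}$ for a tripotent.'' The uniqueness recalled in the paper's preliminaries concerns the element $b$ satisfying the \emph{three} conditions $Q(a)(b)=a$, $Q(b)(a)=b$ and $[Q(a),Q(b)]=0$; for the pair $u=\Phi^{**}(e)$, $v=\Psi^{**}(e)$ you only know the first two, and the commutation condition has not been verified. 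Elements satisfying only the two $Q$-identities are genuinely non-unique, even when $a$ is a tripotent: in $M_2(\mathbb{C})$ with $\{x,y,z\}=\frac12(xy^*z+zy^*x)$, take $a=e_{11}$ and $b=e_{11}+e_{12}$; then $Q(a)(b)=ab^*a=a$ and $Q(b)(a)=ba^*b=b$, yet $b\neq a=a^{\wedge}$. (Here $\|b\|=\sqrt{2}$ and $b$ is not a tripotent, which shows that the norm-one information must enter the argument; bare uniqueness of $a^{\wedge}$ cannot do the job.)

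The missing ingredient is the rigidity lemma of Friedman and Russo, which is exactly what the paper invokes: since $u$ is a tripotent and $Q(u)(v)=u$, applying $Q(u)$ once more gives $P_2(u)(v)=Q(u)^2(v)=u$; as $\|v\|\leq 1$, \cite[Lemma 1.6]{FriRu85} yields $v=u+P_0(u)(v)$, and by symmetry $u=v+P_0(v)(u)$. Hence $u-v$ is orthogonal to both $u$ and $v$, therefore to itself, and the axiom $\|\{x,x,x\}\|=\|x\|^3$ gives $u=v$. (This is the same device the paper uses in the C$^*$-algebra setting, in the discussion preceding Theorem \ref{thm2}, to show that mutually inverse partial isometries are adjoints of each other.) With this step repaired, the remainder of your argument is correct and coincides with the paper's proof.
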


\begin{proof}$(a)\Rightarrow (b)$ By Lemma \ref{l basic pg invertible triples}$(c)$, $\Psi^{**}$ is a normalized-pg-inverse of $\Phi^{**}$. Let $e$ be  a tripotent in $E^{**}$. The maps $\Psi^{**}$ and $\Phi^{**}$ are contractive, and by Lemma \ref{l basic pg invertible triples}$(a)$, $\Psi^{**}(e)$ is a generalized inverse of $\Phi^{**}(e)$ and both lie in the closed unit ball of $F^{**}$. Corollary 3.6 in \cite{BurKaMoPeRa08} assures that $\Phi^{**}(e)$ and $\Psi^{**}(e)$ both are tripotents in $F^{**}$. Let us assume that $\Phi^{**}(e)$ (equivalently, $\Psi^{**}(e)$) is non-zero. The identity \begin{equation}\label{eq new thm triples} \Phi^{**}(e) = \{\Phi^{**}(e), \Psi^{**}(e),\Phi^{**}(e) \}
 \end{equation} implies that $P_2(\Phi^{**}(e)) (\Psi^{**}(e)) = \Phi^{**}(e)$. Lemma 1.6 in \cite{FriRu85} assures that $$\Psi^{**}(e) = \Phi^{**}(e) + P_0(\Phi^{**}(e)) (\Psi^{**}(e))$$ and similarly $$\Phi^{**}(e) = \Psi^{**}(e) + P_0(\Psi^{**}(e)) (\Phi^{**}(e)).$$ We deduce from \eqref{eq new thm triples} that $\Phi^{**}(e) = \Psi^{**}(e)$, for every tripotent $e\in E^{**}$.\smallskip

In a JBW$^*$-triple every element can be approximated in norm by a finite linear combination of mutually orthogonal tripotents (see \cite[Lemma 3.11]{Horn87}). We can therefore guarantee that $\Phi^{**}= \Psi^{**}$ is a triple homomorphism.\smallskip

The implication $(b)\Rightarrow (a)$ is established in \cite[Lemma 1$(a)$]{BarDanHor88}.\smallskip

The final statement follows from \cite[Theorem 3.2]{BurMarMoPe2016}.
\end{proof}

The next corollary, which is an extension of Corollary \ref{c Jordan homomorphisms} for JB$^*$-algebras, is probably part of the folklore in JB$^*$-algebra theory but we do not know an explicit reference.

\begin{corollary}\label{c contractive are symetric in JB*-algebras} Let $A$ and $B$ be JB$^*$-algebras and let $\Phi:A\rightarrow B$ be a Jordan homomorphism.  Then the following statements are equivalent:\begin{enumerate}[$(a)$] \item $\Phi $ is a contraction;
\item $\Phi$ is a symmetric map {\rm(}i.e. $\Phi$ is a Jordan $^*$-homomorphism{\rm)};
\item $\Phi$ is a triple homomorphism.
\end{enumerate}
If the closed unit ball of $A$ contains extreme points, then the above statements are also equivalent to the following:
\begin{enumerate}[$(c)$]\item $\Phi$ \emph{strongly preserves regularity}, that is, $\Phi(x^{\wedge}) = \Phi(x)^{\wedge}$ for every $x\in A^{\wedge}$.
\end{enumerate}
\end{corollary}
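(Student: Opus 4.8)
The plan is to deduce Corollary \ref{c contractive are symetric in JB*-algebras} as a direct application of Theorem \ref{thm2 triples}, using the fact that a Jordan homomorphism is always a normalized-pg-inverse of itself. First I would observe that if $\Phi : A \to B$ is a Jordan homomorphism between JB$^*$-algebras, then $\Phi$ satisfies $\Phi(U_a(b)) = U_{\Phi(a)}(\Phi(b))$, where $U_a(b) = 2(a\circ b)\circ a - a^2\circ b$; equivalently, in terms of the JB$^*$-triple product $\J abc = (a\circ b^*)\circ c + (c\circ b^*)\circ a - (a\circ c)\circ b^*$, one checks that $\Phi\{a,b,c\} = \{\Phi(a), \Phi(b^*)^*, \Phi(c)\}$. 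Setting $\Psi(x) := \Phi(x^*)^*$, I would verify that $(\Phi,\Psi)$ is JB$^*$-triple multiplicative in the sense of Definition \ref{def pg-inverse triples}, so that the machinery of the previous theorem applies.

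Next I would establish the equivalences. For $(a)\Leftrightarrow (c)$, I would apply Theorem \ref{thm2 triples} to the pair $(\Phi,\Psi)$ with $\Psi = \Phi(\,\cdot^*)^*$: when $\Phi$ is a contraction, so is $\Psi$ (taking adjoints and involutions preserves norm), so condition $(a)$ of that theorem holds and yields that $\Phi = \Psi$ is a triple homomorphism. The identity $\Phi = \Psi$ reads $\Phi(x) = \Phi(x^*)^*$ for all $x$, which is precisely the statement that $\Phi$ is a Jordan $^*$-homomorphism; this gives $(a)\Rightarrow (b)$. The implication $(b)\Rightarrow (c)$ follows from the standard fact that every Jordan $^*$-homomorphism between JB$^*$-algebras is a triple homomorphism (it intertwines the triple product displayed above once it respects both the Jordan product and the involution). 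Finally $(c)\Rightarrow (a)$ is immediate since every triple homomorphism is contractive by \cite[Lemma 1$(a)$]{BarDanHor88}.

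For the last clause, under the assumption that the closed unit ball of $A$ has extreme points, I would again invoke Theorem \ref{thm2 triples}, whose final statement gives the equivalence of $(a)$--$(c)$ there with the strong preservation of regularity. Since $\Phi$ is its own normalized-pg-inverse, condition $(c)$ of the corollary matches condition $(c)$ of the theorem verbatim, so no extra work is needed beyond recording that $\Phi = \Psi$.

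The main obstacle I anticipate is purely verificational rather than conceptual: confirming cleanly that a Jordan homomorphism between JB$^*$-algebras satisfies $\Phi\{a,b,c\} = \{\Phi(a),\Phi(b^*)^*,\Phi(c)\}$ for the JB$^*$-triple product, since this requires expanding the triple product in terms of the Jordan product and involution and using the linearized Jordan identity $\Phi(a\circ b) = \Phi(a)\circ \Phi(b)$ together with compatibility with $^*$ only where it is actually available. Care is needed because the implication $(b)\Rightarrow(c)$ presupposes the $^*$-structure is respected, whereas the pair $(\Phi,\Phi(\,\cdot^*)^*)$ used to run the theorem does not. Once this identity is in hand, everything reduces to the already-proved Theorem \ref{thm2 triples}, and the argument is essentially a dictionary translation.
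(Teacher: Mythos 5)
Your proposal is correct and follows essentially the same route as the paper: the paper's proof likewise sets $\Psi(x)=\Phi(x^*)^*$, verifies via the identities $\Phi\{a,b,a\}=\Phi(U_a(b^*))=U_{\Phi(a)}(\Phi(b^*))=\{\Phi(a),\Phi(b^*)^*,\Phi(a)\}$ and $\Phi(\{a,b,a\}^*)^*=\{\Phi(a^*)^*,\Phi(b),\Phi(a^*)^*\}$ that $(\Phi,\Psi)$ is JB$^*$-triple multiplicative, and then invokes Theorem \ref{thm2 triples} for all the equivalences, including the final clause on strong preservation of regularity. Your extra care about $(b)\Rightarrow(c)$ being a direct verification (independent of the theorem) is sound and fills in what the paper leaves implicit.
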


\begin{proof} In the hypothesis of the Corollary, we observe that the identities $$\Phi \{a,b,a\} = \Phi (U_{a} (b^*)) = U_{\Phi(a)} (\Phi(b^*)) = \{\Phi (a), \Phi(b^*)^*, \Phi (a)\},$$ $$\Phi( \{a,b,a\}^*)^* = \Phi (U_{a^*} (b))^* = U_{\Phi(a^*)^*} (\Phi(b)^*) = \{\Phi (a^*)^*, \Phi(b), \Phi (a^*)^*\},$$ hold for every $a,b\in A$. This shows that the mapping $x\mapsto  \Psi(x) = \Phi(x^*)^*$ is a normalized-pg-inverse of $\Phi$. \smallskip

$(a)\Rightarrow (b)$ If $\Phi$ is contractive then $\Psi$ is contractive too, and it follows from Theorem \ref{thm2 triples} that $\Psi = \Phi$, or equivalently, $\Phi (a^*) = \Phi (a)^*$ for every $a$. The other implications have been proved in Theorem \ref{thm2 triples}.
\end{proof}

Returning to Corollaries \ref{c Jordan homomorphisms} and \ref{c contractive are symetric in JB*-algebras}, in a personal communication, M. Cabrera and A. Rodr{\'\i}guez noticed that, though an explicit reference for these results seems to be unknown, they can be also rediscovered with arguments contained in their recent monograph \cite{CaRod14}. We thank Cabrera and Rodr{\'i}guez for bringing our attention to the lemma and arguments presented below, and for providing the appropriate connections with the results in \cite{CaRod14}.

\begin{lemma}\label{211013dd}
Let $A$ be a $JB^*$-algebra, and let $e$ be an idempotent in $A$ such that $ \| e \|  =1$. Then $e^*=e$.
\end{lemma}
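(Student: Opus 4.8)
The plan is to reduce the statement to the corresponding classical fact for C$^*$-algebras by means of the Shirshov--Cohn theorem. First I would pass to the unitisation of $A$ (which is again a JB$^*$-algebra) and consider the closed JB$^*$-subalgebra $J$ generated by $e$, $e^*$ and the unit. As a Jordan algebra $J$ is generated by the two elements $e$ and $e^*$ together with the unit, so by the Shirshov--Cohn theorem for JB$^*$-algebras (see the corresponding statement in \cite{CaRod14}) the algebra $J$ is special; that is, there exist a C$^*$-algebra $C$ and an isometric Jordan $^*$-monomorphism $\pi : J \to C$.

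Next I would transfer the hypotheses through $\pi$. Since $\pi$ is a Jordan $^*$-homomorphism and $C$ carries the Jordan product $x\circ y = \frac12(xy+yx)$, the idempotent law $e\circ e = e$ becomes $\pi(e)^2 = \pi(e\circ e) = \pi(e)$; thus $\pi(e)$ is an \emph{associative} idempotent of the C$^*$-algebra $C$. Moreover $\pi$ is isometric, so $\|\pi(e)\| = \|e\| = 1$.

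At this point I would invoke the classical fact that a norm-one idempotent in a C$^*$-algebra is automatically self-adjoint (equivalently, a projection): representing $C$ faithfully on a Hilbert space, a bounded idempotent whose range and kernel fail to be orthogonal is an oblique projection of norm strictly greater than $1$, so a norm-one idempotent must be an orthogonal projection. Applying this to $\pi(e)$ yields $\pi(e)^* = \pi(e)$. Finally, because $\pi$ is a $^*$-map and injective, $\pi(e^*) = \pi(e)^* = \pi(e)$, whence $e^* = e$ by injectivity of $\pi$, as desired.

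The main obstacle is the correct invocation of the Shirshov--Cohn theorem: one must verify that the $^*$-closed object generated by $e$ is genuinely a two-generated (plus unit) Jordan algebra, so that speciality applies, and that speciality for JB$^*$-algebras delivers an \emph{isometric} Jordan $^*$-embedding into a C$^*$-algebra, so that the norm-one hypothesis is preserved under $\pi$. Once this embedding is secured, both the translation of the Jordan idempotent law into an associative one and the appeal to the C$^*$-algebra fact are routine.
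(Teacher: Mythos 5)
Your proposal is correct and takes essentially the same route as the paper: the paper invokes \cite[Proposition 3.4.6]{CaRod14} (the JB$^*$-algebra form of the Shirshov--Cohn theorem) to see that the closed $*$-subalgebra of $A$ generated by $\{e,e^*\}$ is a JC$^*$-algebra, i.e.\ sits isometrically as a Jordan $^*$-subalgebra of a C$^*$-algebra, and then cites \cite[Corollary 1.2.50]{CaRod14} for the fact that a norm-one idempotent in a C$^*$-algebra is self-adjoint. Your extra unitisation step and your sketch of the proof of that C$^*$-fact are harmless variations on the same argument.
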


\begin{proof} By \cite[Proposition 3.4.6]{CaRod14}, the closed subalgebra of $A$
generated by $\{e,e^*\}$ is a JC$^*$-algebra (i.e. a norm closed Jordan $^*$-subalgebra of a C$^*$-algebra).
Therefore $e$ can be regarded as a norm-one idempotent in a C$^*$-algebra, so that, by
\cite[Corollary 1.2.50]{CaRod14}, we have $e^*=e$, as required.
\end{proof}

The unital version of Corollary \ref{c contractive are symetric in JB*-algebras} is treated in \cite[Corollary
3.3.17(a)]{CaRod14}. The general statement needs a more elaborated argument to rediscover Corollary \ref{c contractive are symetric in JB*-algebras}.

%\begin{proposition}\label{250217aa}
%Let $A$ and $B$ be JB$^*$-algebras, and let $\Phi
%:A\to B$ be a contractive Jordan homomorphism. Then $\Phi $ is a
%$*$-mapping.
%\end{proposition}

\begin{proof}[New proof of Corollary \ref{c contractive are symetric in JB*-algebras}] Let $\Phi:A\to B$ be a contractive Jordan homomorphism between JB$^*$-algebras. If $A$ and $B$ are unital and $\Phi$ maps the unit in $A$ to the unit in $B$, then the result follows from  \cite[Corollary 3.3.17(a)]{CaRod14}.\smallskip

We deal now with the general statement. We may assume that $\Phi \neq
0$. It is known that $A^{**}$ and $B^{**}$ are unital JB$^*$-algebras
whose products and involutions extend those of $A$ and $B$, respectively
(cf. \cite[Proposition 3.5.26]{CaRod14}), $\Phi^{**} :A^{**}\to
B^{**}$ is a contractive Jordan algebra homomorphism (cf. \cite[Lemma 3.1.17]{CaRod14}),
and $e:=\Phi (1)$ is a norm-one idempotent in $B^{**}$. Therefore, by Lemma \ref{211013dd} and \cite[Lemma 2.5.3]{CaRod14},
$U_e (B^{**})$ is a closed Jordan $*$-subalgebra of $B^{**}$ (hence a unital
JB$^*$-algebra) containing $\Phi^{**}(A^{**})$. Then $\Phi^{**}$,
regarded as a mapping from $A^{**}$ to  $U_e(B^{**})$, becomes a
unit-preserving contractive algebra homomorphism. By the first
paragraph of this proof, $\Phi^{**}$ (and hence $\Phi $) is a
$*$-mapping.
\end{proof}


\begin{thebibliography}{22}%\bibitem{AriasMbekhta} L. Arias, and M. Mbekhta, On partial isometries in  C$^*$-algebras,  \emph{Studia Mathematica} \textbf{205.1} (2011), 71-82.

\bibitem{BarDanHor88} T.J. Barton, T. Dang, G. Horn, Normal representations of Banach Jordan triple systems, \emph{Proc. Amer. Math. Soc.} \textbf{102}, no. 3, 551-555 (1988).

\bibitem{BarTi} T. Barton, R. M. Timoney, Weak$^*$-continuity of Jordan triple products
 and applications, \emph{Math. Scand.} \textbf{59} 177-191 (1986).

\bibitem{BlePaul2001} D.P. Blecher, V.I. Paulsen, Multipliers of operator spaces, and the injective envelope, \emph{Pacific Journal of Mathematics} \textbf{200} (1), 1-17 (2001).

\bibitem{BurKaMoPeRa08}  M. Burgos, A. Kaidi, A. Morales, A.M. Peralta, M. Ram{\'i}rez, von Neumann regularity and quadratic conorms in
JB$^*$-triples and C$^*$-algebras, \emph{Acta Math. Sin. (Engl. Ser.)}, \textbf{24} 185--200 (2008).

\bibitem{BMM11} M. Burgos, A. C. M\'{a}rquez-Garc\'{i}a, A. Morales-Campoy, Linear maps strongly preserving Moore-Penrose invertibility, \emph{Oper. Matrices}, Volume \textbf{6}, Number 4, 819--831 (2012).

\bibitem{BurMarMor12} M. Burgos, A. C. M\'{a}rquez-Garc\'{i}a, A. Morales-Campoy, Strongly preserver problems in Banach algebras and C$^*$-algebras, \emph{Lin. Alg. Appl.} \textbf{437}, 1183--1193 (2012).

\bibitem{BurMarMoPe2016} M.J. Burgos, A. Márquez-García, A. Morales-Campoy, A.M. Peralta, Linear maps between C$^*$-algebras preserving extreme points and strongly linear preservers, \emph{Banach J. Math. Anal.} \textbf{10}, no. 3, 547-565 (2016).

\bibitem{CaRod14} M. Cabrera, A. Rodr{\'\i}guez,
\textit{Non-associative normed algebras Volume 1: The Vidav-Palmer
and Gelfand-Naimark Theorems}. Cambridge University Press.
Encyclopedia of Mathematics and Its Applications \textbf{154},
2014.

%\bibitem{Chu2012} Ch.-H. Chu, \emph{Jordan Structures in Geometry and Analysis}, Cambridge Tracts in Math. 190, Cambridge. Univ. Press, Cambridge, 2012.

%\bibitem{deFilippis2009} V. De Filippis, Generalized derivations as Jordan homomorphisms on Lie ideals and right ideals, \emph{Acta Math. Sin.} (Engl. Ser.) \textbf{25}, no. 12, 1965-1974 (2009).

\bibitem{Di} S. Dineen, The second dual of a JB$^*$-triple system, In: Complex analysis, functional analysis and approximation theory (ed. by J. M\'ugica),
67-69, (North-Holland Math. Stud. 125), North-Holland,
Amsterdam-New York, (1986).


\bibitem{FerGarSanSi} A. Fern{\'a}ndez L{\'o}pez, E. Garc{\'\i}a Rus, E.
S{\'a}nchez Campos, M. Siles Molina, Strong regularity and
generalized inverses in Jordan systems, \emph{Comm. Algebra}
\textbf{20}, no. 7, 1917-1936 (1992).

%\bibitem{FriHak} Y. Friedman, J. Hakeda, Additivity of quadratic maps, \emph{Publ. Res. Inst. Math. Sci.} \textbf{24}, no. 5, 707-722 (1988).

\bibitem{FriRu85} Y. Friedman, B. Russo, Structure of
the predual of a JBW$^*$-triple, \emph{J. Reine u. Angew. Math.}
\textbf{356}, 67-89 (1985).

\bibitem{Harris81} L. Harris, A generalization of C$^*$-algebras, \emph{Proc. London Math. Soc.} \textbf{42}, 331-361 (1981).

\bibitem{HarMb92} R. Harte, M. Mbekhta,  On generalized inverses in C$^*$-algebras, \emph{Studia Math.} \textbf{103}, 71--77 (1992).

%\bibitem{HarMb93} R. Harte, M. Mbekhta, Generalized inverses in C$^*$-algebras II, \emph{Studia Math.} \textbf{106} 129--138 (1993).

\bibitem{HewRoss} E. Hewitt, K.A. Ross, \emph{Abstract harmonic analysis.
Vol. II: Structure and analysis for compact groups. Analysis on
locally compact Abelian groups}, Springer-Verlag, New York-Berlin,
1970.

\bibitem{Horn87} G. Horn, Characterization of the predual and ideal structure of a JBW$^*$-triple, \emph{Math. Scand.} \textbf{61}, no. 1, 117-133 (1987).

\bibitem{Hua} L.K. Hua, On the automorphisms of a sfield, \emph{Proc. Nat. Acad. Sci. U.S.A.} \textbf{35}, 386--389 (1949).

\bibitem{Ka83} Kaup, W., A Riemann Mapping Theorem for bounded symmentric domains in
complex Banach spaces, \emph{Math. Z.} \textbf{183}, 503-529 (1983).


\bibitem{Ka96} W. Kaup, On spectral and
singular values in JB$^*$-triples, \emph{Proc. Roy. Irish Acad. Sect.
A} \textbf{96}, no. 1, 95-103 (1996).

\bibitem{Ka2001} Kaup, W., On Grassmanian associated with JB*-triples,
\emph{Math. Z.} \textbf{236}, 567-584 (2001).

\bibitem{KimPark} S.A. Kim, Ch. Park, Additivity of Jordan triple product homomorphisms on generalized matrix algebras, \emph{Bull. Korean Math. Soc.} \textbf{50}, no. 6, 2027-2034 (2013).

\bibitem{Lu03} F. Lu, Jordan triple maps, \emph{Linear Algebra Appl.} \textbf{375}, 311-317 (2003).

%\bibitem{MajeedShaheen2009} A.R.H. Majeed, R.C. Shaheen, Generalized Jordan homomorphisms and Jordan triple homomorphisms onto prime rings, \emph{Iraqi J. Sci.}, \textbf{50}, no. 2, 221-225 (2009).

\bibitem{MbRodSem06} M. Mbekhta, L. Rodman, P. \v{S}emrl, Linear maps preserving generalized invertibility, \emph{Integral Equations Operator Theory} \textbf{55}, no. 1, 93-109 (2006).

\bibitem{Mol06a} L. Moln{\'a}r, Non-linear Jordan triple automorphisms of sets of self-adjoint matrices and operators, \emph{Studia Math.} \textbf{173}, no. 1, 39-48  (2006).

\bibitem{Mol06} L. Moln{\'a}r, Multiplicative Jordan triple isomorphisms on the self-adjoint elements of von Neumann algebras, \emph{Linear Algebra Appl.} \textbf{419}, no. 2-3, 586-600 (2006).

\bibitem{Murph}  G.J. Murphy, \emph{C$^*$-Algebras and Operator Theory}, Academic Press, New York 1990.

\bibitem{Rick60} C.E. Rickart, \emph{General theory of Banach algebras}, van Nostrand, Princeton, NJ, 1960.

\bibitem{S} S. Sakai, \emph{C$^*$-algebras and $W^*$-algebras}. Springer Verlag. Berlin 1971.

%\bibitem{CSS} C. Schmoeger,  Spectral propriets of generalized inverses, \emph{Demonstratio Math}. {\bf 37}, 137-148 (2004).

%\bibitem{JDMW} J. D. M.  Wright,  Jordan C$^*$-algebras, \emph{Michigan Math. J}. {\bf 24}, 291-302 (1977).
\end{thebibliography}
\end{document}